\providecommand{\U}[1]{\protect \rule{.1in}{.1in}}
\newtheorem{theorem}{Theorem}
\theoremstyle{plain}
\newtheorem{proposition}{Proposition}
\newtheorem{remark}{Remark}
\numberwithin{equation}{section}
\title{ Approximation Results on Neural Network  Operators of Convolution Type }
\begin{document}
\author{Asiye Arif$^{1}$}
\author{Tu\u{g}ba Yurdakadim$^{2}$}
\subjclass[]{}
\keywords{}
\maketitle
\noindent$^{1,2}$Bilecik Şeyh Edebali University, Turkey\\
E-mail: arif\_asiya@mail.ru$^1$, tugba.yurdakadim@bilecik.edu.tr $^2$\\[2ex]

\begin{abstract}
In the present paper, we introduce three neural network operators of  convolution type  activated by symmetrized, deformed and parametrized $ B $- generalized logistic  function. We deal with  the approximation  properties of these  operators to the identity by using  modulus of continuity. Furthermore, we show that  our operators preserve global smoothness and  consider the iterated versions of them. Here, we find it is worthy to mention that these operators play important roles in neural network approximation since most of the basic  network models are activated  by logistic functions. 
\end{abstract}

\section{Introduction and Motivation}
 \indent The inspiration behind neural networks, artificial neural networks is biological nature and function of the human brain. Neural and artificial neural networks  are cornerstone of modern machine learning, artificial  intelligence. The investigation to create artificial intelligence capable of learning and problem solving has a rich history with early conceptualizations dating back to the perception in the late 1950's \cite{rozenbaltt}. This pioneering work has laid the groundwork for future developments by introducing the concept of a simple learning algorithm. However, this field has faced significant challenges and periods of reduced interest, often referred as ''AI winters", due to limitations in computational power and the complexity of the problems being considered. Despite these setbacks, the fundamental ideas of interconnected nodes (neurons) processing information and adjusting their connections (weight) based on experience stay down in the center of field's desires. \\
 \indent The resurgence of neural networks in recent decades is largely attributed to advancements in the power of computers, accessibility of big data, and breakthroughs in network architectures and training algorithms. This renewed interest has led to rise of machine learning, a sub-field of AI focuses  to enable computers  learn from data without explicit programming and  encompasses  a broad  range of techniques consisting of neural networks, decision trees, support vector machines, Bayesian methods. Within this landscape in data, neural networks have proven particularly adept at handling complex, non-linear relationships in data, making them appropriate for particular tasks, for example, image recognition, natural language processing, optimization, process control system, forecasting, approximation of functions and solutions of curve fitting \cite{ALADAg},  \cite{ARMSTRONG}, \cite{AVCI}, \cite{amatod}, \cite{BISHOP},  \cite{CHUNG}, \cite{CURTIS}, \cite{cIFTER}, \cite{FAALSIDE},  \cite{MISNKY}, \cite{PARKER}.\\
 \indent A significant leap forward within machine learning has been the emergence of deep learning. Deep learning models are characterized by their depth, meaning they consist of multiple layers of interconnected neurons \cite{FAUSETT}. This depth allows them to learn hierarchical representations of data, with each layer extracting increasingly abstract and complex features. For instance, in image recognition, the first layers might detect edges and corners, while deeper layers learn to recognize shapes, objects, and ultimately, entire scenes. The success of deep learning has been fueled by the development of efficient training algorithms, such as backpropagation, and the availability of specialized hardware, like GPUs, which can accelerate the training process  \cite{rozenbaltt}, \cite{FAUSETT}, \cite{HINTON} \cite{Seppo}. Convolutional neural networks, recurrent neural networks, and transformers are just a few examples of deep learning architectures that have reconfigured fields like computer vision, natural language processing, and speech recognition. Furthermore, the exploration of novel activation functions and approximation operators plays  crucial roles in enhancing the performance and efficiency of these models. \\
 \indent Motivated by the capabilities of neural networks, recent researches have focused on the rate  of neural network approximation to continuous functions by special  neural network operators of Cardaliaguet-Euvrard and squashing types, with the use of  modulus of continuity of  given function or its high order derivative both in univariate and multivariate cases. Furthermore, this direction of studies in this area have been developed by considering different activation functions 
\cite{G.A.2023}, \cite{G.A.2022}, \cite{Anastassiou20232}, \cite{anast 1997}, \cite{anast 2001}, \cite{anast 2011}, \cite{banahc}, \cite{cantur}, \cite{BROOMHEAD}, \cite{Cardaliaguet}, \cite{CASDAGLI}, \cite{Cybenko},  \cite{POWELL}.
\\
 \indent In the present paper, we introduce three neural network operators of  convolution type  activated by symmetrized, deformed and parametrized $ B $- generalized logistic  function. We deal with  the approximation  properties of these  operators to the identity by using  modulus of continuity. Furthermore, we show that  our operators preserve global smoothness and  consider the iterated versions of them. Here, we find it is worthy to mention that these operators play important roles in neural network approximation since most of the basic  network models are activated  by logistic functions.
\section{ Preliminaries}
Let us recall the following deformed, parametrized, $B-$ generalized logistic function and its basic properties as below
\cite{G.A.2023}:
\begin{equation}
\nu_{q,\beta }(x)=\frac{1}{1+qB^{-\beta  x}},~ x \in \mathbb{R},~ q,~\beta >0,~ B>1
\end{equation}
\begin{itemize}
\item[•] $\nu_{q,\beta }(+\infty)=1, ~~\nu_{q,\beta }(-\infty)=0,$
\item[•] $\nu_{q,\beta }(x)=1-\nu_{\frac{1}{q},\beta }(-x),~ for~every~ x \in (-\infty, \infty),$
\item[•] $ \nu_{q,\beta }(0)=\frac{1}{1+q}.$
\end{itemize}
Then with the use of $\nu$, the following activation function  has been considered  and its properties have been given in \cite{G.A.2022}:
\begin{equation}
G_{q,\beta }(x)=\frac{1}{2}\left(\nu_{q, \beta }(x+1)-\nu_{q,\beta }(x-1)\right),~x \in (-\infty, \infty).
\end{equation}
One can easily notice that 
\begin{align*}
G_{q, \beta }(-x)=&\frac{1}{2}\left(\nu_{\frac{1}{q},\beta }(x+1)-\nu_{\frac{1}{q}, \beta }(x-1)\right) =G_{ \frac{1}{q}, \beta }(x), 
\end{align*}

i.e.,
\begin{equation}
\label{symmetry}
G_{q,\beta }(-x)=G_{ \frac{1}{q}, \beta }(x), 
\end{equation} 
which means, that we have a deformed symmetry and the global maximum of $G_{q,\beta}(x)$ is
\begin{equation*}
G_{q,  \beta }\left( \frac{\log_B q}{\beta} \right)= \frac{B^{\beta }-1}{2(B^{ \beta }+1)}.
\end{equation*}
\noindent It is known from \cite{Anastassiou20232} that  $ G_{q, \beta }$ is a density function on $(-\infty, \infty)$ since,
\begin{equation*}
\int_{-\infty}^{\infty} G_{q, \beta }(x)dx=1.
\end{equation*}

By observing the following symmetry 
\begin{equation*}
\left( G_{q,\beta } + G_{ \frac{1}{q}, \beta } \right)(-x)=\left( G_{q, \beta } + G_{ \frac{1}{q}, \beta } \right)(x),
\end{equation*}
we introduce 
\begin{equation*}
\Psi= \frac{ G_{q, \beta }+ G_{ \frac{1}{q}, \beta }}{2}.
\end{equation*}
Since our aim is to get a symmetrized density function, this definition of $\Psi$ serves to our aim, i.e.,
 \begin{equation*}
 \Psi(x)=\Psi(-x),~ x \in (-\infty, \infty)
 \end{equation*}
 and
\begin{equation*}
\int_{-\infty}^{ \infty} \Psi(x)dx=1, 
\end{equation*}
implying
\begin{equation}
\int_{-\infty}^{\infty} \Psi(nx-v)dv=1, for~every~ n \in \mathbb{N},~ x \in (-\infty, \infty).
\end{equation}

\section{Main Results}
Here, we introduce the following convolution type operators activated by symmetrized, $ q $-deformed and $ \beta $-parametrized $ B $- generalized logistic function. Our aim in this section is to examine their approximation properties and improve these results. \\
\indent Now by considering the above new density function $\Psi $, we introduce the following neural network operators of convolution type for $f\in C_{B}(\mathbb{R})$ consisting of the functions which are continuous and bounded on $\mathbb{R}$ and $n\in \mathbb{N}$ :
\begin{equation}
\label{3.1}
\mathtt{A}_n (f)(x):=\int_{-\infty}^{ \infty} f \left( \frac{v}{n}\right) \Psi(nx-v) dv,
\end{equation}
\begin{align}
\label{3.2}
\mathtt{A}_n^*(f)(x):&= n \int_{-\infty}^{ \infty} \left( \int^{\frac{v+1}{n}}_{\frac{v}{n}} f \left( h \right)dh\right)\Psi(nx-v) dv\\
\nonumber &=n \int_{-\infty}^{\infty} \left( \int^{\frac{1}{n}}_0 f \left( h+\frac{v}{n}\right)dh\right)\Psi(nx-v) dv,
\end{align}
\begin{equation}
\label{3.3}
 \overline{\mathtt{A}_n}(f)(x):=\int_{-\infty}^{ \infty} \left( \sum_{s=1}^r w_s f \left( \frac{v}{n}+\frac{s}{nr}\right)\right) \Psi(nx-v) dv,
\end{equation}
where $w_s \geq 0$, $\displaystyle \sum_{s=1}^r w_s=1$ and 
(\ref{3.2}) is  called  activated Kantorovich type of (\ref{3.1}),
(\ref{3.3}) is  called  activated  Quadrature type of (\ref{3.1}).\\
Here is our first result. 
\begin{theorem}
\label{t1}
\begin{equation}
\int_{\{v\in\mathbb{R}:|nx-v|\geq n^{1-\alpha}\}}\Psi(nx-v) dv< \frac{\left( q+\frac{1}{q} \right) }{B^{ \beta (n^{1-\alpha}-1)}}
\end{equation} holds for  $0<\alpha<1$, $n\in \mathbb{N}$ such that $n^{1-\alpha}>2$. 
\end{theorem}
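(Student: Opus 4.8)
The plan is to remove the dependence on $x$, exploit the symmetry of $\Psi$, and then evaluate the tail of $G_{q,\beta}$ almost in closed form. First I would substitute $u=nx-v$, under which $\{v:|nx-v|\ge n^{1-\alpha}\}$ maps onto $\{u:|u|\ge n^{1-\alpha}\}$ and the left-hand side becomes $\int_{\{|u|\ge n^{1-\alpha}\}}\Psi(u)\,du$, an expression independent of $x$. Since $\Psi(u)=\Psi(-u)$, this two-sided tail equals $2\int_{n^{1-\alpha}}^{\infty}\Psi(u)\,du$, and because $\Psi=\frac12\bigl(G_{q,\beta}+G_{\frac1q,\beta}\bigr)$ it collapses to
\[
\int_{n^{1-\alpha}}^{\infty}\Bigl(G_{q,\beta}(u)+G_{\frac1q,\beta}(u)\Bigr)\,du .
\]
Thus it suffices to bound the right tail of $G_{q,\beta}$ and then read off the companion bound by replacing $q$ with $1/q$.

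The key step, and the one I expect to be the main obstacle, is evaluating $\int_{c}^{\infty}G_{q,\beta}(u)\,du$ with $c:=n^{1-\alpha}$. Writing $G_{q,\beta}(u)=\frac12\bigl(\nu_{q,\beta}(u+1)-\nu_{q,\beta}(u-1)\bigr)$, one cannot integrate the two logistic terms separately, since each $\nu_{q,\beta}\to1$ at $+\infty$ and its integral diverges. The remedy is to pass to the complementary function $h(u):=1-\nu_{q,\beta}(u)=\frac{qB^{-\beta u}}{1+qB^{-\beta u}}$, which decays like $B^{-\beta u}$ and is integrable at $+\infty$. Then $\nu_{q,\beta}(u+1)-\nu_{q,\beta}(u-1)=h(u-1)-h(u+1)$, and a shift of the integration variable telescopes the infinite tail into a finite window of length $2$:
\[
\int_{c}^{\infty}G_{q,\beta}(u)\,du
=\frac12\int_{c-1}^{c+1}h(s)\,ds
=\frac12\int_{c-1}^{c+1}\frac{qB^{-\beta s}}{1+qB^{-\beta s}}\,ds .
\]

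To finish I would estimate the integrand on $[c-1,c+1]$. Because the denominator exceeds $1$ we have $h(s)<qB^{-\beta s}$, and since $B^{-\beta s}$ is decreasing this is at most $qB^{-\beta(c-1)}$ throughout the window; integrating over a set of length $2$ gives
\[
\int_{c}^{\infty}G_{q,\beta}(u)\,du< qB^{-\beta(c-1)}=\frac{q}{B^{\beta(c-1)}},
\]
the inequality being strict owing to the $+1$ in the denominator of $h$. Replacing $q$ by $1/q$ yields $\int_{c}^{\infty}G_{\frac1q,\beta}(u)\,du<\frac{1/q}{B^{\beta(c-1)}}$, and adding the two contributions produces the asserted bound $\frac{q+\frac1q}{B^{\beta(c-1)}}$ with $c-1=n^{1-\alpha}-1$. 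The hypothesis $n^{1-\alpha}>2$ enters only to guarantee $c-1>1>0$, ensuring that the window $[c-1,c+1]$ lies where $h$ is genuinely decaying and the exponential majorant is meaningful.
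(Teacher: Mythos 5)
Your proof is correct, and it takes a genuinely different route from the paper's. The paper works with a pointwise exponential majorant for $\Psi$: applying the Mean Value Theorem to $G_{q,\beta}(x)=\frac{1}{2}\bigl(\nu_{q,\beta}(x+1)-\nu_{q,\beta}(x-1)\bigr)$ for $x\ge 1$ gives $G_{q,\beta}(x)=\nu'_{q,\beta}(\xi)$ with $x-1<\xi<x+1$, whence $G_{q,\beta}(x)< q\beta(\ln B)B^{-\beta(x-1)}$ and analogously for $G_{\frac{1}{q},\beta}$, so that
\begin{equation*}
\Psi(x)<\tfrac{1}{2}\left(q+\tfrac{1}{q}\right)\beta(\ln B)\,B^{-\beta(x-1)},\qquad x\ge 1,
\end{equation*}
and the theorem follows by integrating this majorant over the tail, after the same symmetry reduction you perform. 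You instead avoid differentiation entirely: passing to the complementary logistic $h=1-\nu_{q,\beta}$ and shifting the variable telescopes the infinite tail into the exact identity $\int_{c}^{\infty}G_{q,\beta}(u)\,du=\frac{1}{2}\int_{c-1}^{c+1}h(s)\,ds$, which you then bound by the supremum $qB^{-\beta(c-1)}$ over a window of length $2$; both routes land on precisely the same constant $\bigl(q+\frac{1}{q}\bigr)B^{-\beta(n^{1-\alpha}-1)}$. Your telescoping identity is arguably cleaner: it is exact up to the final one-line estimate, it sidesteps the paper's slightly careless Mean Value step (the paper writes $G_{q,\beta}(x)=q\beta(\ln B)B^{-\beta\xi}$, where only $\le$ holds after discarding the factor $(1+qB^{-\beta\xi})^{-2}$), and it never actually needs $n^{1-\alpha}>2$ --- since $h(s)=q/(B^{\beta s}+q)$ is decreasing on all of $\mathbb{R}$, your estimate is valid for every $c$, so your closing remark that the hypothesis ensures the window lies ``where $h$ decays'' is unnecessary rather than essential. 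What the paper's pointwise majorant buys in exchange is reusability: the bound $\Psi(x)<\frac{1}{2}\bigl(q+\frac{1}{q}\bigr)\beta(\ln B)B^{-\beta(x-1)}$ is invoked repeatedly later (in the moment Proposition and in the Taylor-remainder estimates of Theorems \ref{10}, \ref{11}, \ref{12}) to control weighted tails such as $\int|nx-v|^{N}\Psi(nx-v)\,dv$, which your exact tail identity does not directly supply.
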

\begin{proof}
Since 
\begin{equation}
G_{q, \beta }(x)=\frac{1}{2}\left(\nu_{q, \beta }(x+1)-\nu_{q, \beta }(x-1)\right),~x \in (-\infty, \infty),
\end{equation}
first letting 
$ x \in [1, \infty)$, and using Mean Value Theorem, we get 
\begin{equation}
G_{q, \beta }(x)=\frac{1}{2} \nu'_{q, \beta }(\xi)2=\nu'_{q, \beta }(\xi)=\frac{q  \beta (\ln B)}{(1+q  B^{-\beta  \xi})^2  B^{\beta \xi}},~ 0\leq x-1<\xi< x+1.
\end{equation}
Also, we obtain  that 
\begin{equation*}
G_{q, \beta }(x)=q  \beta  (\ln B) B^{-\beta  \xi}< q  \beta  (\ln B) B^{-\beta (x-1)}
\end{equation*}
and similarly,  
\begin{equation*}
G_{ \frac{1}{q}, \beta }(x)=\frac{1}{q} \beta  (\ln B) B^{-\beta  \xi}< \frac{1}{q}  \beta  (\ln B) B^{-\beta (x-1)}
\end{equation*}
which imply 
\begin{equation*}
\Psi<\frac{1}{2}\left( q+\frac{1}{q} \right)  \beta  (\ln B) B^{-\beta (x-1)}.
\end{equation*}
Let 
\begin{equation*}
F:=\{ v\in \mathbb{R}:|nx-v|\geq n^{1-\alpha} \}
\end{equation*}
and then we can write that 
\begin{align*}
\int_F \Psi(nx-v) dv&=\int_F \Psi(|nx-v|) dv<\frac{1}{2}\left( q+\frac{1}{q} \right) \beta (\ln B) \int_F B^{-\beta (|nx-v|-1)} dv\\
&=2 \cdot \frac{1}{2}\left( q+\frac{1}{q} \right) \beta  (\ln B)\int_{n^{1-\alpha}}^{\infty} B^{-\beta (x-1)}dx\\
&=\frac{\left( q+\frac{1}{q} \right)}{B^{\beta (n^{1-\alpha}-1)}}.
\end{align*}
Hence, we complete the proof. 
\end{proof}
Notice that  if one takes $q=1$, $\beta =\mu$, $B=e$ then Theorem \ref{t1} reduces to Theorem 5 of \cite{G.A.2024}.
Recall the modulus of continuity  of a function $f$ for $\theta >0$ on $\mathbb{R}$ as follows:
\begin{displaymath}
\omega(f,\theta):=\sup_{ \begin{subarray}{l}
x,y \in \mathbb{R}\\
|x-y|\leq\theta \end{subarray} } |f(x)-f(y)|.
\end{displaymath}
Now we are ready to present quantitative convergence results for our operators. Note that for Theorems 2, 3, 4, we let  $0<\alpha<1$, $n\in \mathbb{N}$ such that $n^{1-\alpha}>2$. 
\begin{theorem}
\label{theorem3}
\begin{equation*}
\left| \mathtt{A}_n (f)(x)-f(x)\right|\leq\omega\left(f,\frac{1}{n^{\alpha}}\right)+ \frac{2\left( q+\frac{1}{q} \right)  \Vert f \Vert_{\infty}}{B^{ \beta (n^{1-\alpha}-1)}}=:\mathtt{T}
\end{equation*}
and 
\begin{equation*}
\Vert \mathtt{A}_n(f)-f\Vert_{\infty} \leq \mathtt{T}
\end{equation*} hold for   $f\in C_B(\mathbb{R})$. 
Furthermore,  we have  that $ \displaystyle \lim_{n\rightarrow \infty} \mathtt{A}_n(f)=f$, pointwise and uniformly for $f\in C_{UB}(\mathbb{R})$, consisting of the functions which are uniformly continuous and bounded on $\mathbb{R}$.
\end{theorem}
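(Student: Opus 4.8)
The plan is to exploit the fact that $\Psi(nx-\cdot)$ integrates to $1$, which converts the difference $\mathtt{A}_n(f)(x)-f(x)$ into an averaged increment of $f$. First I would use the normalization $\int_{-\infty}^{\infty}\Psi(nx-v)\,dv=1$ established in the preliminaries to write
\[
\mathtt{A}_n(f)(x)-f(x)=\int_{-\infty}^{\infty}\left(f\left(\frac{v}{n}\right)-f(x)\right)\Psi(nx-v)\,dv.
\]
Passing to absolute values inside the integral and using that $\Psi\geq 0$ yields
\[
\left|\mathtt{A}_n(f)(x)-f(x)\right|\leq\int_{-\infty}^{\infty}\left|f\left(\frac{v}{n}\right)-f(x)\right|\Psi(nx-v)\,dv.
\]

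Next I would split the domain of integration at the threshold appearing in Theorem~\ref{t1}, into the near region $\{v:|nx-v|<n^{1-\alpha}\}$ and the far region $F=\{v:|nx-v|\geq n^{1-\alpha}\}$. On the near region, the inequality $|nx-v|<n^{1-\alpha}$ is equivalent to $\left|x-\frac{v}{n}\right|<\frac{1}{n^{\alpha}}$, so by the definition of the modulus of continuity $\left|f\left(\frac{v}{n}\right)-f(x)\right|\leq\omega\left(f,\frac{1}{n^{\alpha}}\right)$; bounding the remaining mass of $\Psi$ by $1$ produces the first summand of $\mathtt{T}$. On the far region I would use the crude estimate $\left|f\left(\frac{v}{n}\right)-f(x)\right|\leq 2\Vert f\Vert_{\infty}$ and then invoke Theorem~\ref{t1} to control $\int_F\Psi(nx-v)\,dv$, which yields the second summand. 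Adding the two contributions gives the pointwise inequality, and since every bound is independent of $x$, taking the supremum over $x\in\mathbb{R}$ immediately gives the sup-norm statement $\Vert \mathtt{A}_n(f)-f\Vert_{\infty}\leq\mathtt{T}$.

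Finally, for the convergence claim I would let $n\to\infty$ in $\mathtt{T}$. The second term tends to $0$ because $B>1$ together with $n^{1-\alpha}\to\infty$ forces $B^{\beta(n^{1-\alpha}-1)}\to\infty$. For $f\in C_{UB}(\mathbb{R})$, uniform continuity gives $\omega\left(f,\frac{1}{n^{\alpha}}\right)\to 0$ as $n\to\infty$, so the first term vanishes as well; hence $\mathtt{T}\to 0$ and $\mathtt{A}_n(f)\to f$ uniformly, which in particular entails pointwise convergence. I do not expect a genuine obstacle here: the entire argument rests on the decay estimate of Theorem~\ref{t1} for the tail of $\Psi$, and the only point requiring a little care is the passage $\omega(f,1/n^{\alpha})\to 0$, which is valid precisely because we restrict to uniformly continuous $f$ for the convergence assertion.
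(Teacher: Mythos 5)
Your proposal is correct and follows essentially the same route as the paper's own proof: the normalization $\int_{-\infty}^{\infty}\Psi(nx-v)\,dv=1$, the split into the sets $\left\{v:\left|\frac{v}{n}-x\right|<\frac{1}{n^{\alpha}}\right\}$ and $\left\{v:\left|\frac{v}{n}-x\right|\geq\frac{1}{n^{\alpha}}\right\}$ (the latter being exactly the set $F$ of Theorem~\ref{t1}), the modulus-of-continuity bound on the near region, and the $2\Vert f\Vert_{\infty}$ estimate combined with Theorem~\ref{t1} on the far region. The concluding limit argument, with $\omega\left(f,\frac{1}{n^{\alpha}}\right)\rightarrow 0$ requiring uniform continuity and the tail term vanishing since $B>1$, also matches the paper.
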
 
\begin{proof}
Let
$ \displaystyle F_1:=\left\lbrace v\in \mathbb{R}:\left| \frac{v}{n}-x \right|<\frac{1}{n^{\alpha}} \right \rbrace$,
$ \displaystyle F_2:=\left\lbrace v\in \mathbb{R}:\left| \frac{v}{n}-x \right| \geq \frac{1}{n^{\alpha}} \right \rbrace.$
Then $F_1\cup F_2=\mathbb{R}$ and we can write by using Theorem \ref{t1} that 
\begin{align*}
\vert \mathtt{A}_n(f)(x)-f(x)\vert&=\left\vert \int_{-\infty}^{ \infty} f\left(\frac{v}{n}\right) \Psi(nx-v)dv-f(x)\int_{-\infty}^{ \infty} \Psi(nx-v)dv \right\vert\\
&\leq  \int_{-\infty}^{ \infty} \left\vert f\left(\frac{v}{n}\right)-f(x)\right\vert \Psi(nx-v)dv\\
&= \int_{F_1} \left\vert f \left(\frac{v}{n}\right)-f(x)\right\vert \Psi(nx-v)dv+  \int_{F_2} \left\vert f \left(\frac{v}{n}\right)-f(x)\right\vert \Psi(nx-v)dv\\
&\leq \int_{F_1} \omega \left( f, \left\vert \frac{v}{n}-x \right\vert \right) \Psi(nx-v)dv+  2 \Vert f \Vert_{\infty}\int_{F_2}  \Psi(nx-v)dv\\
& \leq \omega \left( f, \frac{1}{n^{\alpha}} \right)+  \frac{  2 \Vert f \Vert_{\infty} \left( q+\frac{1}{q} \right)}{B^{\beta (n^{1-\alpha}-1)}}
\end{align*}
which gives the desired results. 
\end{proof}
\begin{theorem}
\label{theorem4}
\begin{equation*}
\left\vert \mathtt{A}_n^* (f)(x)-f(x)\right\vert \leq\omega\left(f,\frac{1}{n}+\frac{1}{n^{\alpha}}\right)+ \frac{2\left( q+\frac{1}{q} \right)\Vert f \Vert_{\infty}}{B^{\beta (n^{1-\alpha}-1)}}=:\mathtt{E}
\end{equation*} 
and 
\begin{equation*}
\Vert \mathtt{A}_n^*(f)-f\Vert_{\infty} \leq \mathtt{E}
\end{equation*} hold for $f\in C_B(\mathbb{R})$.
Furthermore, we have that $\displaystyle \lim_{n\rightarrow \infty} \mathtt{A}_n^*(f)=f$, pointwise and uniformly for  $f\in C_{UB}(\mathbb{R})$.
\end{theorem}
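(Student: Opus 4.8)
The plan is to mimic the proof of Theorem \ref{theorem3}, inserting one extra layer to absorb the inner average over $h$. First I would record two normalizations: the density property $\int_{-\infty}^{\infty} \Psi(nx-v)\,dv = 1$ from Section 2, and the elementary identity $n\int_0^{1/n} dh = 1$. Multiplying $f(x)$ by both of these, I can rewrite the difference as a single double integral of $f(h+v/n) - f(x)$ against $\Psi$:
\begin{equation*}
\mathtt{A}_n^*(f)(x) - f(x) = n\int_{-\infty}^{\infty} \left(\int_0^{1/n}\left[f\left(h+\frac{v}{n}\right) - f(x)\right]dh\right)\Psi(nx-v)\,dv.
\end{equation*}
Passing to absolute values inside both integrals (the remaining integrand is nonnegative, since $\Psi \geq 0$) reduces the task to estimating $\left|f\left(h+\frac{v}{n}\right)-f(x)\right|$.

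Next I would split the outer integral over the same two sets used for Theorem \ref{theorem3}, namely $F_1 = \{v : |v/n - x| < 1/n^{\alpha}\}$ and $F_2 = \{v : |v/n - x| \geq 1/n^{\alpha}\}$. The only genuinely new step is the triangle inequality on $F_1$: for $h \in [0, 1/n]$ one has
\begin{equation*}
\left|h + \frac{v}{n} - x\right| \leq |h| + \left|\frac{v}{n} - x\right| < \frac{1}{n} + \frac{1}{n^{\alpha}},
\end{equation*}
so the integrand is bounded by $\omega\left(f, \frac{1}{n} + \frac{1}{n^{\alpha}}\right)$; this is exactly where the extra $\frac{1}{n}$ inside the modulus (compared with Theorem \ref{theorem3}) enters. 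Using $n\int_0^{1/n} dh = 1$ and $\int_{F_1}\Psi(nx-v)\,dv \leq 1$, the $F_1$-contribution is at most $\omega\left(f, \frac{1}{n} + \frac{1}{n^{\alpha}}\right)$.

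On $F_2$ I would use the crude bound $\left|f\left(h+\frac{v}{n}\right) - f(x)\right| \leq 2\Vert f\Vert_{\infty}$; after the same normalization the inner average again contributes a factor $1$, leaving $2\Vert f\Vert_{\infty} \int_{F_2}\Psi(nx-v)\,dv$. Since $F_2$ is precisely $\{v : |nx - v| \geq n^{1-\alpha}\}$, Theorem \ref{t1} bounds this tail integral by $\left(q + \frac{1}{q}\right)\big/ B^{\beta(n^{1-\alpha}-1)}$, which yields the second term of $\mathtt{E}$. Summing the two contributions gives the pointwise estimate, and taking the supremum over $x$ gives the $\Vert\cdot\Vert_{\infty}$ version.

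Finally, for $f \in C_{UB}(\mathbb{R})$ I would note that $\omega(f, \theta) \to 0$ as $\theta \to 0^{+}$, so both terms of $\mathtt{E}$ tend to $0$ as $n \to \infty$: the first because $\frac{1}{n} + \frac{1}{n^{\alpha}} \to 0$, and the second because $B > 1$ forces $B^{\beta(n^{1-\alpha}-1)} \to \infty$. This gives uniform, and hence pointwise, convergence. I expect no serious obstacle here; the entire difficulty is bookkeeping the double integral, and the single substantive idea is the $\frac{1}{n} + \frac{1}{n^{\alpha}}$ bound on $F_1$.
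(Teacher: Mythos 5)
Your proposal is correct and follows essentially the same route as the paper: the same normalization of $f(x)$ via $\int_{-\infty}^{\infty}\Psi(nx-v)\,dv=1$ and $n\int_0^{1/n}dh=1$, the same split over $F_1$ and $F_2$, and the same use of Theorem \ref{t1} on the tail. The only cosmetic difference is that you bound the $F_1$-integrand directly by $\omega\left(f,\frac{1}{n}+\frac{1}{n^{\alpha}}\right)$ via the triangle inequality, while the paper first writes $\omega\left(f,|t|+\frac{1}{n^{\alpha}}\right)$ under the inner integral and then enlarges it, which amounts to the same estimate.
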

\begin{proof}
Let $F_1$ and $F_2$ be defined as above. Then we obtain by using Theorem \ref{t1} that
\begin{align*}
\left\vert \mathtt{A}_n^* (f)(x)-f(x)\right\vert&=\left\vert n \int_{-\infty}^{ \infty} \left( \int^{\frac{v+1}{n}}_{\frac{v}{n}} f \left( t\right)dt\right) \Psi(nx-v) dv -\int_{-\infty}^{ \infty} f(x) \Psi(nx-v)dv  \right \vert \\
&=\left\vert n \int_{-\infty}^{ \infty} \left( \int^{\frac{v+1}{n}}_{\frac{v}{n}} f \left( t\right)dt\right) \Psi(nx-v) dv -n \int_{-\infty}^{ \infty} \left(   \int^{\frac{v+1}{n}}_{\frac{v}{n}} f(x)dt \right) \Psi(nx-v)dv \right\vert \\ 
&\leq  n \int_{-\infty}^{ \infty} \left( \int^{\frac{v+1}{n}}_{\frac{v}{n}} \left\vert f(t)-f(x)\right\vert dt
\right) \Psi(nx-v) dv\\
&=  n \int_{-\infty}^{ \infty} \left( \int^{\frac{1}{n}}_{0} \left\vert f\left(t+\frac{v}{n}\right) -f(x)\right\vert dt
\right) \Psi(nx-v) dv \\
&\leq   \int_{F_1} \left( n  \int^{\frac{1}{n}}_{0} \left\vert f\left(t+\frac{v}{n}\right) -f(x)\right\vert dt
\right)  \Psi(nx-v) dv \\
&+\int_{F_2} \left( n  \int^{\frac{1}{n}}_{0} \left\vert f\left(t+\frac{v}{n}\right) -f(x)\right\vert dt
\right) \Psi(nx-v) dv\\
&\leq   \int_{F_1} \left( n  \int^{\frac{1}{n}}_{0}  \omega \left(f,|t|+\frac{1}{n^{\alpha}}\right)dt
\right) \Psi(nx-v) dv +2 \Vert f \Vert_{\infty}\int_{F_2}  \Psi(nx-v)dv\\
&\leq \omega \left(f, \frac{1}{n}+\frac{1}{n^{\alpha}}\right)+ \frac{2\left( q+\frac{1}{q} \right)  \Vert f \Vert_{\infty}}{B^{\beta (n^{1-\alpha}-1)}}
\end{align*}
which gives the desired results. 
\end{proof}
\begin{theorem}
\label{theorem5}
\begin{equation*}
\left\vert \overline{\mathtt{A}_n} (f)(x)-f(x)\right\vert \leq\omega\left(f,\frac{1}{n}+\frac{1}{n^{\alpha}}\right)+ \frac{ 2\left( q+\frac{1}{q} \right) \Vert f \Vert_{\infty}}{B^{ \beta (n^{1-\alpha}-1)}}=\mathtt{E}
\end{equation*} 
and 
\begin{equation*}
\Vert \overline{\mathtt{A}_n}(f)-f\Vert_{\infty} \leq \mathtt{E}
\end{equation*} hold for $f\in C_B(\mathbb{R})$.
Furthermore  we have  $\displaystyle \lim_{n\rightarrow \infty} \overline{\mathtt{A}_n}(f)=f$, pointwise and uniformly for  $f\in C_{UB}(\mathbb{R})$.
\end{theorem}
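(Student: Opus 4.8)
The plan is to follow the template of the proof of Theorem~\ref{theorem4} almost verbatim, exploiting that the inner weights $w_s$ form a partition of unity together with the density property $\int_{-\infty}^\infty \Psi(nx-v)\,dv = 1$ of $\Psi$. First I would use $\sum_{s=1}^r w_s = 1$ and this normalization to rewrite $f(x) = \int_{-\infty}^\infty \left( \sum_{s=1}^r w_s f(x) \right) \Psi(nx-v)\,dv$, so that the difference collapses into a single integral:
\begin{equation*}
\overline{\mathtt{A}_n}(f)(x) - f(x) = \int_{-\infty}^\infty \left( \sum_{s=1}^r w_s \left[ f\!\left( \tfrac{v}{n} + \tfrac{s}{nr} \right) - f(x) \right] \right) \Psi(nx-v)\,dv.
\end{equation*}
Taking absolute values inside and using the triangle inequality together with $w_s \geq 0$, the problem reduces to estimating $\left| f\!\left( \tfrac{v}{n} + \tfrac{s}{nr} \right) - f(x) \right|$ uniformly in $s$.

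Next I would split the outer domain into $F_1$ and $F_2$ exactly as in the previous proofs. The decisive observation is the node estimate: for every $s \in \{1, \dots, r\}$ one has $\tfrac{s}{nr} \leq \tfrac{1}{n}$, hence on $F_1$,
\begin{equation*}
\left| \tfrac{v}{n} + \tfrac{s}{nr} - x \right| \leq \left| \tfrac{v}{n} - x \right| + \tfrac{s}{nr} < \tfrac{1}{n^\alpha} + \tfrac{1}{n},
\end{equation*}
so each inner difference is bounded by $\omega\!\left(f, \tfrac{1}{n} + \tfrac{1}{n^\alpha}\right)$. Summing against $\sum_s w_s = 1$ and integrating $\Psi$ over $F_1$ (whose mass is at most $1$) produces the first term of $\mathtt{E}$. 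On $F_2$ I would instead use the crude bound $\left| f\!\left( \tfrac{v}{n} + \tfrac{s}{nr} \right) - f(x) \right| \leq 2\Vert f \Vert_\infty$, factor it out of the sum via $\sum_s w_s = 1$, and invoke Theorem~\ref{t1} to control $\int_{F_2} \Psi(nx-v)\,dv$, which yields the second term.

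I expect no genuine obstacle, since the computation mirrors Theorem~\ref{theorem4}; the only point needing care is threading the sum over $s$ correctly through both the modulus estimate and the $2\Vert f\Vert_\infty$ estimate, which the normalization $\sum_{s=1}^r w_s = 1$ handles cleanly. The uniform bound $\Vert \overline{\mathtt{A}_n}(f) - f \Vert_\infty \leq \mathtt{E}$ then follows by taking the supremum over $x$, as $\mathtt{E}$ is independent of $x$. Finally, for $f \in C_{UB}(\mathbb{R})$ both $\tfrac{1}{n} + \tfrac{1}{n^\alpha} \to 0$ and $B^{-\beta(n^{1-\alpha}-1)} \to 0$ as $n \to \infty$, so uniform continuity forces $\omega\!\left(f, \tfrac{1}{n}+\tfrac{1}{n^\alpha}\right) \to 0$ and the entire right-hand side vanishes, giving both pointwise and uniform convergence.
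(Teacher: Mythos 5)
Your proposal is correct and follows essentially the same route as the paper's proof: rewrite $f(x)$ using $\sum_{s=1}^r w_s=1$ and $\int_{-\infty}^{\infty}\Psi(nx-v)\,dv=1$, split over $F_1$ and $F_2$, bound the near part by $\omega\left(f,\frac{1}{n}+\frac{1}{n^{\alpha}}\right)$ via $\frac{s}{nr}\leq\frac{1}{n}$, and control the far part by $2\Vert f\Vert_{\infty}$ together with Theorem~\ref{t1}. Your explicit justification of the node estimate $\left|\frac{v}{n}+\frac{s}{nr}-x\right|\leq\left|\frac{v}{n}-x\right|+\frac{1}{n}$ is a detail the paper leaves implicit, but the argument is the same.
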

\begin{proof}
Again, we can write by using  Theorem \ref{t1} that
\begin{align*}
|\overline{\mathtt{A}_n}(f)(x)-f(x)|&=\left\vert \int_{-\infty}^{\infty}  \sum_{s=1}^r w_s f \left( \frac{v}{n}+\frac{s}{nr}\right) \Psi(nx-v) dv-  \int_{-\infty}^{\infty} \left( \sum_{s=1}^r w_s f(x)\right) \Psi(nx-v) dv\right\vert\\
&\leq \int_{-\infty}^{ \infty}  \sum_{s=1}^r w_s\left\vert f \left( \frac{v}{n}+\frac{s}{nr}\right)-f(x)\right\vert  \Psi(nx-v) dv\\
&\leq \int_{F_1}  \sum_{s=1}^r w_s \left\vert f \left( \frac{v}{n}+\frac{s}{nr}\right)-f(x)\right\vert  \Psi(nx-v) dv \\
&+\int_{F_2}  \sum_{s=1}^r w_s  \left\vert f \left( \frac{v}{n}+\frac{s}{nr}\right)-f(x)\right\vert  \Psi(nx-v) dv\\
&\leq \omega \left(f, \frac{1}{n}+\frac{1}{n^{\alpha}}\right)+ \frac{2 \left( q+\frac{1}{q} \right)\Vert f \Vert_{\infty}}{B^{\beta (n^{1-\alpha}-1)}}
\end{align*}
which gives the desired results. 
\end{proof}
\begin{proposition}
\begin{equation*}
\int_{-\infty}^{\infty} |h|^k \Psi(h)dh \leq \frac{B^{ \beta }-1}{(B^{ \beta }+1)(k+1)}  +\left( q+\frac{1}{q} \right)\frac{ B^{ \beta }}{ \beta ^k (\ln B)^k} \Gamma(k+1)<\infty
\end{equation*}
holds for $k \in \mathbb{N}$.
\end{proposition}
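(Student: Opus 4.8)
The plan is to exploit the evenness of $\Psi$ together with the two pointwise estimates already available in the excerpt: the global maximum value $\frac{B^{\beta}-1}{2(B^{\beta}+1)}$ and the exponential tail bound $\Psi(x)<\frac{1}{2}\left(q+\frac{1}{q}\right)\beta(\ln B)B^{-\beta(x-1)}$ valid for $x\in[1,\infty)$. Since $\Psi(h)=\Psi(-h)$, I would first reduce the integral to the half-line,
\[
\int_{-\infty}^{\infty}|h|^{k}\Psi(h)\,dh=2\int_{0}^{\infty}h^{k}\Psi(h)\,dh,
\]
and then split $[0,\infty)$ at the point $h=1$, handling the two resulting pieces with the two different bounds.

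On $[0,1]$ I would bound $\Psi$ by its global maximum. Because $\Psi$ is the average of $G_{q,\beta}$ and $G_{\frac{1}{q},\beta}$, each of which attains maximum value $\frac{B^{\beta}-1}{2(B^{\beta}+1)}$, we have $\Psi(h)\leq\frac{B^{\beta}-1}{2(B^{\beta}+1)}$ for every $h$, whence
\[
2\int_{0}^{1}h^{k}\Psi(h)\,dh\leq\frac{B^{\beta}-1}{B^{\beta}+1}\int_{0}^{1}h^{k}\,dh=\frac{B^{\beta}-1}{(B^{\beta}+1)(k+1)},
\]
which is precisely the first term on the right-hand side.

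On $[1,\infty)$ I would insert the exponential tail bound to obtain
\[
2\int_{1}^{\infty}h^{k}\Psi(h)\,dh<\left(q+\tfrac{1}{q}\right)\beta(\ln B)\int_{1}^{\infty}h^{k}B^{-\beta(h-1)}\,dh.
\]
The substitution $t=\beta(\ln B)h$ rewrites $B^{-\beta(h-1)}=B^{\beta}e^{-t}$ and reduces the remaining integral to an upper incomplete Gamma integral $\int_{\beta\ln B}^{\infty}t^{k}e^{-t}\,dt$, while the prefactors collapse to $\frac{B^{\beta}}{\beta^{k}(\ln B)^{k}}$. Dominating the incomplete integral by the full $\Gamma(k+1)=\int_{0}^{\infty}t^{k}e^{-t}\,dt$, which is legitimate because the integrand is nonnegative, yields exactly the second term. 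Adding the two pieces gives the claimed inequality, and finiteness is immediate since $\Gamma(k+1)<\infty$ for $k\in\mathbb{N}$.

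The step I expect to require the most careful bookkeeping is the substitution on $[1,\infty)$: one must correctly extract the constant $B^{\beta}$ from $B^{-\beta(h-1)}=B^{\beta}B^{-\beta h}$ and verify that the powers of $\beta\ln B$ combine so that the factor $\beta(\ln B)$ coming from the tail bound cancels exactly one power, leaving $\beta^{k}(\ln B)^{k}$ in the denominator. Once the incomplete Gamma function is recognized and bounded by $\Gamma(k+1)$, the remainder of the argument is routine.
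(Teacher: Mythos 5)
Your proposal is correct and follows essentially the same route as the paper: evenness, a split at $h=1$, the global maximum bound on $[0,1]$, and the exponential tail bound with a Gamma-function substitution on $[1,\infty)$. The only cosmetic difference is that you substitute first and then dominate the resulting incomplete Gamma integral $\int_{\beta\ln B}^{\infty}t^{k}e^{-t}\,dt$ by $\Gamma(k+1)$, whereas the paper first enlarges the range of integration from $[1,\infty)$ to $[0,\infty)$ and then substitutes to obtain the full $\Gamma(k+1)$ directly --- the same domination performed in a different order.
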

\begin{proof} 
We can easily compute that
\begin{align*}
\int_{-\infty}^{\infty} |h|^k \Psi(h)dh&=2\int_{0}^{\infty} h^k  \Psi(h)dh\\
&=2\left(\int_{0}^{1} h^k  \Psi(h)dh +\int_{1}^{\infty} h^k  \Psi(h)dh\right)\\
&\leq 2\left( \frac{B^{\beta }-1}{2(B^{\beta }+1)}\int_{0}^{1} h^k dh +\int_{1}^{\infty} h^k \frac{\frac{1}{2}\left( q+\frac{1}{q} \right) \beta  (\ln B)}{B^{\beta(h-1)}} dh\right)\\
&\leq  \frac{B^{ \beta }-1}{(B^{\beta }+1)(k+1)}  +\left( q+\frac{1}{q} \right)\beta  B^{\beta }(\ln B)\int_{0}^{\infty} h^k B^{-\beta  h} dh\\
&\leq \frac{B^{\beta }-1}{(B^{\beta }+1)(k+1)} +\left( q+\frac{1}{q} \right)\frac{ B^{\beta }}{ \beta ^k (\ln B)^k}\int_{0}^{\infty} x^k e^{-x} dx\\
&\leq \frac{B^{\beta }-1}{(B^{\beta }+1)(k+1)}  +\left( q+\frac{1}{q} \right)\frac{ B^{\beta }}{ \beta^k (\ln B)^k} \Gamma(k+1)< \infty\\
&\leq \frac{B^{\beta }-1}{(B^{\beta }+1)(k+1)}  +\left( q+\frac{1}{q} \right)\frac{ B^{\beta } k!}{ \beta ^k (\ln B)^k} < \infty.
\end{align*}
\end{proof}
\begin{theorem}
\label{theorem6}
\begin{equation}
\label{38}
\omega(\mathtt{A}_n(f),\theta)\leq\omega(f,\theta),~\theta >0
\end{equation}
 holds for $f\in C_B(\mathbb{R})\cup C_U(\mathbb{R})$. 
Furthermore, we have  $\mathtt{A}_n(f)\in C_U (\mathbb{R})$  for  $f\in C_U(\mathbb{R})$ where  $ C_U(\mathbb{R})$ is the set of all uniformly continuous functions on  $\mathbb{R}$.
\end{theorem}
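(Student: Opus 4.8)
The plan is to exploit the convolution structure of $\mathtt{A}_n$. First I would perform the change of variables $u=nx-v$ in \eqref{3.1}, which turns the operator into the genuine convolution
\[
\mathtt{A}_n(f)(x)=\int_{-\infty}^{\infty} f\!\left(x-\frac{u}{n}\right)\Psi(u)\,du,
\]
using that $\Psi$ is a density with $\int_{-\infty}^{\infty}\Psi(u)\,du=1$. The point of this rewriting is that the kernel $\Psi(u)$ no longer depends on $x$ except through a pure translation of the argument of $f$.

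Next, for $x,y\in\mathbb{R}$ with $|x-y|\le\theta$ I would subtract the two representations and bring the absolute value inside the integral, which is legitimate because $\Psi\ge0$ (this follows from the monotonicity of $\nu_{q,\beta}$, which forces $G_{q,\beta}\ge0$ and hence $\Psi\ge0$):
\[
\left|\mathtt{A}_n(f)(x)-\mathtt{A}_n(f)(y)\right|\le \int_{-\infty}^{\infty}\left| f\!\left(x-\frac{u}{n}\right)-f\!\left(y-\frac{u}{n}\right)\right|\Psi(u)\,du.
\]
The crucial observation is that the two arguments differ by exactly $\left(x-\tfrac{u}{n}\right)-\left(y-\tfrac{u}{n}\right)=x-y$, \emph{independently of} $u$, so each integrand is pointwise bounded by $\omega(f,|x-y|)\,\Psi(u)\le\omega(f,\theta)\,\Psi(u)$. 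Integrating and using $\int_{-\infty}^{\infty}\Psi(u)\,du=1$ gives $\left|\mathtt{A}_n(f)(x)-\mathtt{A}_n(f)(y)\right|\le\omega(f,\theta)$, and taking the supremum over all admissible $x,y$ yields \eqref{38}.

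For the second assertion I would invoke the standard characterization that $g\in C_U(\mathbb{R})$ if and only if $\omega(g,\theta)\to0$ as $\theta\to0^+$. Since $f\in C_U(\mathbb{R})$ gives $\omega(f,\theta)\to0$, the inequality \eqref{38} forces $\omega(\mathtt{A}_n(f),\theta)\to0$ as $\theta\to0^+$, whence $\mathtt{A}_n(f)\in C_U(\mathbb{R})$.

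The argument is essentially routine once the convolution form is in hand; the only points needing care are the nonnegativity of $\Psi$ (needed to pull the modulus of continuity out of the integral) and the well-definedness of $\mathtt{A}_n(f)$ for an unbounded $f\in C_U(\mathbb{R})$, which is guaranteed because a uniformly continuous function on $\mathbb{R}$ grows at most linearly while $\Psi$ has a finite first absolute moment by the preceding Proposition. The genuinely substantive step, and the one I would single out as the heart of the proof, is recognizing that the kernel acts by translation so that the increment $x-y$ of the argument of $f$ is preserved through the operator; everything else is bookkeeping.
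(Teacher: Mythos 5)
Your proposal is correct and follows essentially the same route as the paper's proof: rewriting $\mathtt{A}_n(f)(x)=\int_{-\infty}^{\infty} f\left(x-\frac{h}{n}\right)\Psi(h)\,dh$, bounding $\left|\mathtt{A}_n(f)(x)-\mathtt{A}_n(f)(y)\right|$ by $\omega(f,|x-y|)\int_{-\infty}^{\infty}\Psi(h)\,dh=\omega(f,|x-y|)$, and concluding via $|x-y|\leq\theta$. Your explicit attention to the nonnegativity of $\Psi$, the characterization of $C_U(\mathbb{R})$ by $\omega(g,\theta)\to 0$, and the well-definedness for unbounded $f\in C_U(\mathbb{R})$ are points the paper leaves implicit, but they do not change the argument.
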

\begin{proof}
Since 
\begin{equation*}
\mathtt{A}_n (f)(x)=\int_{-\infty}^{\infty} f\left( x-\frac{h}{n}\right)\Psi(h)dh,
\end{equation*}
let $x,y \in \mathbb{R}$ and then we can write that 
\begin{equation*}
\mathtt{A}_n (f)(x)-\mathtt{A}_n (f)(y)=\int_{-\infty}^{\infty} \left( f \left( x-\frac{h}{n}\right)-f\left( y-\frac{h}{n} \right)\right)\Psi(h)dh.
\end{equation*}
Hence
\begin{align*}
\left \vert \mathtt{A}_n (f)(x)-\mathtt{A}_n (f)(y)\right\vert &\leq \int_{-\infty}^{\infty} \left \vert f \left( x-\frac{h}{n}\right)-f\left( y-\frac{h}{n} \right)\right \vert \Psi(h)dh\\
& \leq \omega (f,|x-y|)\int_{-\infty}^{\infty}  \Psi(h)dh=\omega(f,|x-y|)
\end{align*} holds and by letting $|x-y|\leq \theta$, $\theta>0 $ then we obtain the desired results. 
\end{proof}
\begin{remark}
It is clear that the equality in  (\ref{38}) holds for $f=identity~map=:id$, and we have that 
\begin{equation*}
|\mathtt{A}_n(id)(x)-\mathtt{A}_n(id)(y)|=|id(x)-id(y)|=|x-y|, 
\end{equation*}
\begin{equation*}
\omega(\mathtt{A}_n(id), \theta)=\omega(id, \theta)=\theta>0. 
\end{equation*}
Also, we have that 
\begin{align*}
\mathtt{A}_n(id)(x)&=\int_{-\infty}^{\infty} \left( x-\frac{h}{n}\right) \Psi(h)dh \\
&=x\int_{-\infty}^{\infty}\Psi(h)dh-\frac{1}{n}\int_{-\infty}^{\infty} h \Psi(h)dh=x-\frac{1}{n}\int_{-\infty}^{\infty} h \Psi(h)dh
\end{align*}
and for fixed $x \in \mathbb{R}$,
\begin{align*}
|\mathtt{A}_n(id)(x)|&\leq |x|+\frac{1}{n} \int_{-\infty}^{\infty} |h| \Psi(h)dh \\
&=|x|+\frac{1}{n} \left[\frac{B^{\beta }-1}{2(B^{\beta }+1)} +\frac{ \left( q +\frac{1}{q}\right)B^{\beta }}{\beta \ln B}\right]< \infty.
\end{align*}
It is obvious that $id \in C_U (\mathbb{R}).$
\end{remark}
\begin{theorem}
\label{theorem7}
\begin{equation}
\label{42}
\omega (\mathtt{A}^*_n(f),\theta)\leq\omega(f,\theta),~\theta>0
\end{equation} holds for $f\in C_B(\mathbb{R})\cup C_U(\mathbb{R})$.
Furthermore, we have $\mathtt{A}^*_n(f)\in C_U(\mathbb{R})$ for $f\in C_U(\mathbb{R})$.
\end{theorem}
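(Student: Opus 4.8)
The plan is to mimic the argument used for Theorem \ref{theorem6}, the analogous global smoothness result for $\mathtt{A}_n$. The crucial first step is to recast $\mathtt{A}_n^*(f)(x)$ as a convolution in which the dependence on $x$ sits inside the function argument rather than in the kernel $\Psi$. Starting from the representation
\begin{equation*}
\mathtt{A}_n^*(f)(x) = n \int_{-\infty}^{\infty} \left( \int_0^{\frac{1}{n}} f\left(h+\frac{v}{n}\right)dh\right)\Psi(nx-v)\, dv,
\end{equation*}
I would apply the substitution $u = nx - v$ (so that $\frac{v}{n} = x - \frac{u}{n}$ and $dv = -du$, with the reversed limits absorbing the sign), converting the operator into
\begin{equation*}
\mathtt{A}_n^*(f)(x) = n \int_{-\infty}^{\infty} \left( \int_0^{\frac{1}{n}} f\left(x - \frac{u}{n}+ h\right)dh\right)\Psi(u)\, du.
\end{equation*}

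With this form in hand, for arbitrary $x,y \in \mathbb{R}$ I would subtract the two expressions and bring the difference inside both integrals, obtaining
\begin{equation*}
\mathtt{A}_n^*(f)(x) - \mathtt{A}_n^*(f)(y) = n \int_{-\infty}^{\infty} \left( \int_0^{\frac{1}{n}} \left[ f\left(x - \tfrac{u}{n}+ h\right) - f\left(y - \tfrac{u}{n}+ h\right)\right]dh\right)\Psi(u)\, du.
\end{equation*}
The key observation is that the two arguments of $f$ differ by exactly $x-y$, independently of the inner variables $u$ and $h$, so that $\left| f(x - \tfrac{u}{n}+ h) - f(y - \tfrac{u}{n}+ h)\right| \leq \omega(f,|x-y|)$ pointwise. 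Pulling this constant out of the double integral leaves $n \int_{-\infty}^{\infty} \big( \int_0^{1/n} dh\big)\Psi(u)\, du = \int_{-\infty}^{\infty} \Psi(u)\, du = 1$, since $\Psi$ is a probability density. This yields $\left|\mathtt{A}_n^*(f)(x) - \mathtt{A}_n^*(f)(y)\right| \leq \omega(f,|x-y|)$, and taking the supremum over all pairs with $|x-y|\leq \theta$ gives (\ref{42}).

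For the second assertion, I would use that $f \in C_U(\mathbb{R})$ is equivalent to $\omega(f,\theta) \to 0$ as $\theta \to 0^+$; the inequality just established then forces $\omega(\mathtt{A}_n^*(f),\theta) \to 0$ as well, which is precisely the uniform continuity of $\mathtt{A}_n^*(f)$. The only genuinely delicate step is the opening change of variables: one must check that the Kantorovich averaging over $h \in [0,\tfrac{1}{n}]$ is untouched by the substitution $u = nx-v$, so that after the change the shift by $x$ appears uniformly in every term of the double integral. Once that representation is secured, the remaining estimates are routine and identical in spirit to those in the proof of Theorem \ref{theorem6}.
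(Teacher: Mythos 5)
Your proposal is correct and follows essentially the same route as the paper: the paper likewise rewrites the Kantorovich operator via the substitution $u=nx-v$ as $\mathtt{A}^*_n(f)(x)=n\int_{-\infty}^{\infty}\left(\int_0^{1/n} f\left(t+\left(x-\frac{u}{n}\right)\right)dt\right)\Psi(u)\,du$, bounds the difference at $x$ and $y$ by $\omega(f,|x-y|)$ using $\int_{-\infty}^{\infty}\Psi(u)\,du=1$, and deduces both (\ref{42}) and the preservation of uniform continuity exactly as you do. Your explicit remark that the two arguments of $f$ differ by precisely $x-y$ independently of $u$ and $h$ is the same key observation the paper relies on.
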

\begin{proof}
Notice that
\begin{equation*}
\mathtt{A}^*_n(f)(x)=n \int_{-\infty}^{ \infty} \left( \int^{\frac{1}{n}}_0 f \left( t+\left(x-\frac{h}{n}\right)\right)dt\right) \Psi(h)dh
\end{equation*}
 and
\begin{equation*}
\mathtt{A}^*_n(f)(y)=n \int_{-\infty}^{ \infty} \left( \int^{\frac{1}{n}}_0 f \left( t+\left(y-\frac{h}{n}\right)\right)dt\right)\Psi(h)dh
\end{equation*}
for $ x,~y \in \mathbb{R} $.\\
Hence $\left\vert \mathtt{A}^*_n (f)(x)-\mathtt{A}^*_n (f)(y)\right\vert$
\begin{align*}
\leq& n \int_{-\infty}^{\infty} \left( \int^{\frac{1}{n}}_0 \left \vert f \left( t+\left(x-\frac{h}{n}\right)\right)- f \left( t+\left(y-\frac{h}{n}\right)\right) \right \vert dt\right) \Psi(h)dh \\
\leq &\omega (f,|x-y|)\int_{-\infty}^{\infty} \Psi(h)dh=\omega(f,|x-y|)
\end{align*}
 and again implies  the desired results. 
Furthermore, 
\begin{equation*}
|\mathtt{A}^*_n(id)(x)-\mathtt{A}^*_n(id)(y)|=|x-y|
\end{equation*}
and 
\begin{align*}
|\mathtt{A}^*_n(id)(x)|&\leq n \int_{-\infty}^{ \infty} \left( \int^{\frac{1}{n}}_0 \left(|t|+|x|+\frac{|h|}{n}\right)dt\right) \Psi(h)dh \\& \leq \int_{-\infty}^{\infty} \left(\frac{1}{n}+|x|+\frac{|h|}{n}\right) \Psi(h)dh\\
&= \frac{1}{n}+|x|+\frac{1}{n} \int_{-\infty}^{\infty} |h|\Psi(h)dh < \infty.
\end{align*}
This proves the attainability of (\ref{42}).
\end{proof}
\begin{theorem}
\label{theorem8}
\begin{equation}
\label{46}
\omega( \overline{\mathtt{A}_n}(f),\theta)\leq\omega(f,\theta),~\theta>0
\end{equation}
holds for $f\in C_B(\mathbb{R}) \cup C_U(\mathbb{R}) $.
Furthermore, we have  $ \overline{A_n}(f)\in C_U(\mathbb{R})$ for $f\in C_U(\mathbb{R})$.
\end{theorem}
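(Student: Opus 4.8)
\section*{Proof proposal for Theorem \ref{theorem8}}

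The plan is to mirror exactly the strategy already used for Theorems \ref{theorem6} and \ref{theorem7}, since $\overline{\mathtt{A}_n}$ differs from $\mathtt{A}_n$ only by replacing the single sample $f(v/n)$ with a finite convex combination $\sum_{s=1}^r w_s f\!\left(\frac{v}{n}+\frac{s}{nr}\right)$. First I would perform the change of variable $h=nx-v$ in \eqref{3.3}, which sends $\frac{v}{n}$ to $x-\frac{h}{n}$ and turns the operator into the convolution form
\begin{equation*}
\overline{\mathtt{A}_n}(f)(x)=\int_{-\infty}^{\infty}\left(\sum_{s=1}^r w_s\, f\!\left(x-\frac{h}{n}+\frac{s}{nr}\right)\right)\Psi(h)\,dh,
\end{equation*}
so that the point $x$ now appears additively inside the argument of $f$. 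This is the step that makes the smoothness estimate transparent.

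Next, for $x,y\in\mathbb{R}$ I would subtract the two representations and pull the difference inside the integral and the finite sum, obtaining
\begin{equation*}
\overline{\mathtt{A}_n}(f)(x)-\overline{\mathtt{A}_n}(f)(y)=\int_{-\infty}^{\infty}\sum_{s=1}^r w_s\left(f\!\left(x-\tfrac{h}{n}+\tfrac{s}{nr}\right)-f\!\left(y-\tfrac{h}{n}+\tfrac{s}{nr}\right)\right)\Psi(h)\,dh.
\end{equation*}
Taking absolute values and applying the triangle inequality both to the integral and to the sum (using $w_s\ge 0$), the crucial observation is that the two arguments differ by exactly
\begin{equation*}
\left|\left(x-\tfrac{h}{n}+\tfrac{s}{nr}\right)-\left(y-\tfrac{h}{n}+\tfrac{s}{nr}\right)\right|=|x-y|,
\end{equation*}
independently of the index $s$ and of the integration variable $h$, because the common shifts $-\frac{h}{n}+\frac{s}{nr}$ cancel. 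Hence each summand is bounded by $\omega(f,|x-y|)$, and factoring this constant out leaves $\omega(f,|x-y|)\sum_{s=1}^r w_s\int_{-\infty}^{\infty}\Psi(h)\,dh$, which equals $\omega(f,|x-y|)$ by $\sum_{s=1}^r w_s=1$ together with $\int_{-\infty}^{\infty}\Psi(h)\,dh=1$. Passing to the supremum over all $x,y$ with $|x-y|\le\theta$ yields \eqref{46}.

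Finally, for the uniform-continuity assertion I would simply invoke the inequality \eqref{46}: if $f\in C_U(\mathbb{R})$ then $\omega(f,\theta)\to 0$ as $\theta\to 0^+$, and since $\omega(\overline{\mathtt{A}_n}(f),\theta)\le\omega(f,\theta)$, the left-hand side also tends to $0$, so $\overline{\mathtt{A}_n}(f)\in C_U(\mathbb{R})$. I do not expect any genuine obstacle here; the only point requiring care is verifying that the shift terms cancel uniformly in $s$ so that the finite convex combination can be absorbed by $\sum_s w_s=1$, exactly the role played by the single density mass in Theorems \ref{theorem6} and \ref{theorem7}.
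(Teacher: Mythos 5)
Your proposal is correct and follows essentially the same route as the paper's own proof: the paper likewise rewrites the operator via the substitution $h=nx-v$ as $\overline{\mathtt{A}_n}(f)(x)=\int_{-\infty}^{\infty}\left(\sum_{s=1}^r w_s f\left(\left(x-\frac{h}{n}\right)+\frac{s}{nr}\right)\right)\Psi(h)\,dh$, pulls the difference for two points $x,y$ inside the integral and the convex combination, uses that the common shifts $-\frac{h}{n}+\frac{s}{nr}$ cancel so each summand is bounded by $\omega(f,|x-y|)$, and factors this out via $\sum_{s=1}^r w_s=1$ and $\int_{-\infty}^{\infty}\Psi(h)\,dh=1$. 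Your concluding step, deducing $\overline{\mathtt{A}_n}(f)\in C_U(\mathbb{R})$ from $\omega(\overline{\mathtt{A}_n}(f),\theta)\leq\omega(f,\theta)\to 0$ as $\theta\to 0^+$, is exactly how the paper obtains the uniform-continuity assertion as well.
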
 
\begin{proof}
Notice that
\begin{equation*}
\overline{\mathtt{A}_n}(f)(x)=\int_{-\infty}^{\infty} \left( \sum_{s=1}^r w_s f \left(\left(x- \frac{h}{n} \right)+\frac{s}{nr}\right)\right) \Psi(h)dh
\end{equation*}
 and
\begin{equation*}
\overline{\mathtt{A}_n}(f)(y)=\int_{-\infty}^{\infty} \left( \sum_{s=1}^r w_s f \left(\left(y- \frac{h}{n} \right)+\frac{s}{nr}\right)\right)\Psi(h)dh
\end{equation*}
for $x,~y \in \mathbb{R}$. \\
Hence  $\left\vert \overline{\mathtt{A}_n} (f)(x)-\overline{\mathtt{A}_n} (f)(y)\right\vert$
\begin{align*}
=&\left \vert \int_{-\infty}^{ \infty}  \sum_{s=1}^r w_s \left[ f \left(\left(x- \frac{h}{n} \right)+\frac{s}{nr}\right)-f \left(\left(y- \frac{h}{n} \right)+\frac{s}{nr}\right)\right] \Psi(h)dh \right \vert\\
\leq &  \int_{-\infty}^{ \infty}  \sum_{s=1}^r w_s\left \vert f \left(\left(x- \frac{h}{n} \right)+\frac{s}{nr}\right)-f \left(\left(y- \frac{h}{n} \right)+\frac{s}{nr}\right)\right \vert \Psi(h)dh \\
\leq &\omega (f,|x-y|)\int_{-\infty}^{\infty}\Psi(h)dh=\omega(f,|x-y|)
\end{align*}
\end{proof} and this implies the desired results.
Furthermore,
\begin{equation*}
|\overline{\mathtt{A}_n}(id)(x)-\overline{\mathtt{A}_n}(id)(y)|=|x-y|
\end{equation*}
and 
\begin{align*}
|\overline{\mathtt{A}_n}(f)(id)(x)|&\leq \int_{-\infty}^{ \infty} \left( \sum_{s=1}^r w_s f \left(|x|+ \frac{|h|}{n}+ \frac{1}{n} \right)\right)\Psi(h)dh\\
&\leq \int_{-\infty}^{ \infty} \left( |x|+ \frac{|h|}{n}+ \frac{1}{n}\right)\Psi(h)dh= \frac{1}{n}+|x|+\frac{1}{n}\int_{-\infty}^{\infty} |h|dh< \infty
\end{align*}
proving the attainability of (\ref{46}).

\begin{remark}
Fix $s \in \mathbb{N}$ and let  $f \in C^{(s)}(\mathbb{R})$ such that $f^{(k)} \in C_B (\mathbb{R})$ for $k=1,2,\ldots, s$. Then we can write that 
\begin{equation*}
\mathtt{A}_n(f)(x)=\int_{-\infty}^{ \infty} f\left(x-\frac{h}{n}\right) \Psi(h)dh
\end{equation*}
and with the use of  Leibniz's rule 
\begin{align*}
\frac{\partial^s \mathtt{A}_n(f)(x)}{\partial x^s} & =\int_{-\infty}^{\infty} f^{(s)}\left(x-\frac{h}{n}\right) \Psi(h)dh\\
&=\int_{-\infty}^{ \infty} f^{(s)}\left(\frac{v}{n}\right) \Psi(nx-v)dv=\mathtt{A}_n( f^{(s)})(x),~ for~all~x \in \mathbb{R}.
\end{align*}
\end{remark}
Clearly the followings also hold:
\begin{equation*}
\frac{\partial^s \mathtt{A}^*_n(f)(x)}{\partial x^s}=\mathtt{A}^*_n( f^{(s)})(x);~\frac{\partial^s \overline{ \mathtt{A}_n} (f)(x)}{\partial x^s}= \overline{ \mathtt{A}_n}( f^{(s)})(x).
\end{equation*}
\begin{remark}
Fix $s\in \mathbb{N}$  and let  $f \in C^{(s)} (\mathbb{R})$ such that  $f^{(k)} \in C_B (\mathbb {R})$ for $k=1,2,\ldots, s $. We  derive that  
\begin{align*}
\left( \mathtt{A}_n(f)\right)^{(k)}(x)&= \mathtt{A}_n(f^{(k)})(x),\\
\left(\mathtt{A}^*_n(f)\right)^{(k)}(x)&= \mathtt{A}^*_n(f^{(k)})(x),\\
\left(\overline{ \mathtt{A}_n}(f)\right)^{(k)}(x)&=\overline{ \mathtt{A}_n}(f^{(k)})(x),~ for~all~x\in \mathbb{R} ~and~ k=1,2,\ldots, s.
\end{align*}
\end{remark}
\noindent Then we have the followings under the assumptions of  $0<\alpha<1$ and $n \in \mathbb{N}$ such that $n^{1-\alpha}>2$.
\begin{theorem}
Let  $f^{(k)} \in C_B(\mathbb{R})$ for $k=1, 2, \cdots, s \in \mathbb{N}$. Then we have the followings:
\begin{itemize}
\item[(i)]
\begin{equation*}
\left| ( \mathtt{A}_n (f))^{(k)}(x)-f^{(k)}(x)\right|\leq\omega\left(f^{(k)},\frac{1}{n^{\alpha}}\right)+ \frac{2\left( q+\frac{1}{q} \right)  \Vert f^{(k)} \Vert_{\infty}}{B^{ \beta (n^{1-\alpha}-1)}}=:\mathtt{T}_k
\end{equation*}
and 
\begin{equation*}
\Vert (\mathtt{A}_n(f))^{(k)}-f^{(k)}\Vert_{\infty} \leq  \mathtt{T}_k,
\end{equation*}
\item[(ii)]
\begin{equation*}
\left\vert (\mathtt{A}_n^* (f))^{(k)}(x)-f^{(k)}(x)\right\vert \leq\omega\left(f^{(k)},\frac{1}{n}+\frac{1}{n^{\alpha}}\right)+ \frac{2\left( q+\frac{1}{q} \right)\Vert f^{(k)} \Vert_{\infty}}{B^{\beta (n^{1-\alpha}-1)}}=: \mathtt{E}_k
\end{equation*} 
and 
\begin{equation*}
\Vert (\mathtt{A}_n^*(f))^{(k)}-f^{(k)} \Vert_{\infty} \leq \mathtt{E}_k,
\end{equation*}
\begin{equation*}
\end{equation*}
\item[(iii)]
\begin{equation*}
\left\vert (\overline{\mathtt{A}_n} (f))^{(k)} (x)-f^{(k)}(x)\right\vert \leq\omega\left(f^{(k)},\frac{1}{n}+\frac{1}{n^{\alpha}}\right)+ \frac{ 2\left( q+\frac{1}{q} \right) \Vert f^{(k)} \Vert_{\infty}}{B^{\beta (n^{1-\alpha}-1)}}=\mathtt{E}_k
\end{equation*} 
and 
\begin{equation*}
\Vert (\overline{\mathtt{A}_n}(f))^{(k)}-f^{(k)}\Vert_{\infty} \leq \mathtt{E}_k.
\end{equation*}
\end{itemize}
\end{theorem}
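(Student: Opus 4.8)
The plan is to deduce all three estimates directly from the zeroth-order convergence theorems (Theorems \ref{theorem3}, \ref{theorem4}, \ref{theorem5}) by exploiting the fact that each operator commutes with differentiation. The preceding remarks already establish, via Leibniz's rule, that for $f \in C^{(s)}(\mathbb{R})$ with $f^{(k)} \in C_B(\mathbb{R})$ one has $(\mathtt{A}_n(f))^{(k)} = \mathtt{A}_n(f^{(k)})$, and likewise $(\mathtt{A}_n^*(f))^{(k)} = \mathtt{A}_n^*(f^{(k)})$ and $(\overline{\mathtt{A}_n}(f))^{(k)} = \overline{\mathtt{A}_n}(f^{(k)})$. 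These identities are the engine of the proof.

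For part (i), I would fix $k \in \{1, \dots, s\}$ and observe that since $f^{(k)} \in C_B(\mathbb{R})$, Theorem \ref{theorem3} applies verbatim to the function $f^{(k)}$ in place of $f$, yielding
\begin{equation*}
\left| \mathtt{A}_n(f^{(k)})(x) - f^{(k)}(x) \right| \leq \omega\left(f^{(k)}, \frac{1}{n^{\alpha}}\right) + \frac{2\left(q + \frac{1}{q}\right)\Vert f^{(k)} \Vert_{\infty}}{B^{\beta(n^{1-\alpha}-1)}} = \mathtt{T}_k.
\end{equation*}
Substituting the commutation identity $\mathtt{A}_n(f^{(k)}) = (\mathtt{A}_n(f))^{(k)}$ on the left-hand side gives the pointwise bound; taking the supremum over $x \in \mathbb{R}$ then gives the stated sup-norm bound $\Vert (\mathtt{A}_n(f))^{(k)} - f^{(k)} \Vert_{\infty} \leq \mathtt{T}_k$.

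Parts (ii) and (iii) follow by the identical argument, simply replacing Theorem \ref{theorem3} by Theorem \ref{theorem4} and Theorem \ref{theorem5} respectively, and using the matching commutation relation for $\mathtt{A}_n^*$ and $\overline{\mathtt{A}_n}$. No new computation is required beyond re-reading the earlier theorems with $f^{(k)}$ as the input function, since the constants $\mathtt{E}_k$ are obtained from $\mathtt{E}$ by the same substitution $f \mapsto f^{(k)}$.

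The only genuine subtlety is the validity of the differentiation-under-the-integral step that produces the commutation identities: one needs $f^{(k)} \in C_B(\mathbb{R})$ together with the integrability of $\Psi$ and its moments (guaranteed by the Proposition) to justify moving $\partial^s/\partial x^s$ inside $\int_{-\infty}^{\infty}$. Since this interchange has already been carried out in the remarks, the present theorem is essentially a corollary: the hard analytic work lives in Theorems \ref{theorem3}--\ref{theorem5} and in the Leibniz-rule justification, and what remains here is a bookkeeping substitution of $f^{(k)}$ for $f$.
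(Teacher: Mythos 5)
Your proposal is correct and follows exactly the paper's route: the paper's proof is the one-line remark that Theorems \ref{theorem3}, \ref{theorem4}, \ref{theorem5} immediately give the result, with the commutation identities $(\mathtt{A}_n(f))^{(k)}=\mathtt{A}_n(f^{(k)})$ (and their analogues for $\mathtt{A}_n^*$ and $\overline{\mathtt{A}_n}$) established in the preceding remarks doing the real work. You have simply made explicit the substitution $f\mapsto f^{(k)}$ that the paper leaves implicit, which is a faithful and slightly more careful rendering of the same argument.
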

\begin{proof}
Taking into consideration of Theorems \ref{theorem3}, \ref{theorem4}, \ref{theorem5}, we immediately obtain the proof.
\end{proof}
Now, we present a similar result on the preservation of global smoothness.
\begin{theorem}
Let $f^{(k)} \in C_B(\mathbb{R}) \cup C_U (\mathbb{R})$, for $k=0, 1, \cdots, s \in \mathbb{N}$. Then we have the followings:
\begin{itemize}
\item[(i)]
\begin{equation*}
\omega \left( (\mathtt{A}_n(f))^{(k)}, \theta \right)\leq \omega \left( f^{(k)}, \theta \right), ~\theta>0,
\end{equation*}
also $\left( \mathtt{A}_n(f) \right)^{(k)} \in C_U(\mathbb{R})$ when   $f^{(k)}\in C_U(\mathbb{R})$,
\item[(ii)]
\begin{equation*}
\omega \left( (\mathtt{A}^*_n(f))^{(k)}, \theta \right)\leq \omega \left( f^{(k)}, \theta \right), ~\theta>0,
\end{equation*}
also $\left( \mathtt{A}^*_n(f) \right)^{(k)} \in C_U(\mathbb{R})$ when   $f^{(k)}\in C_U(\mathbb{R})$,
\item[(iii)]
\begin{equation*}
\omega \left( (\overline{\mathtt{A}_n} (f))^{(k)}, \theta \right)\leq \omega \left( f^{(k)}, \theta \right), ~\theta>0,
\end{equation*}
also   $\left( \overline{\mathtt{A}_n}(f) \right)^{(k)} \in C_U(\mathbb{R})$ when   $f^{(k)}\in C_U(\mathbb{R})$.
\end{itemize}
\end{theorem}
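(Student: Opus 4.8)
The plan is to reduce this statement entirely to the already-established preservation results of Theorems~\ref{theorem6}, \ref{theorem7} and \ref{theorem8} by exploiting the fact that differentiation commutes with each of the three operators. Indeed, the Remarks preceding this theorem record the identities
\begin{equation*}
\left(\mathtt{A}_n(f)\right)^{(k)} = \mathtt{A}_n(f^{(k)}),\quad
\left(\mathtt{A}^*_n(f)\right)^{(k)} = \mathtt{A}^*_n(f^{(k)}),\quad
\left(\overline{\mathtt{A}_n}(f)\right)^{(k)} = \overline{\mathtt{A}_n}(f^{(k)}),
\end{equation*}
valid for $k=0,1,\ldots,s$. The key observation is that, once these identities are available, the $k$-th derivative of each operator applied to $f$ is literally the same operator applied to the function $g:=f^{(k)}$, so the global-smoothness estimate for the derivative is nothing but the base estimate read off at $g$.

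First I would fix $k\in\{0,1,\ldots,s\}$ and set $g:=f^{(k)}$; by hypothesis $g\in C_B(\mathbb{R})\cup C_U(\mathbb{R})$, which is exactly the admissibility condition required in Theorems~\ref{theorem6}, \ref{theorem7} and \ref{theorem8}. For part~(i), combining the commutation identity with Theorem~\ref{theorem6} gives
\begin{equation*}
\omega\!\left(\left(\mathtt{A}_n(f)\right)^{(k)},\theta\right)
=\omega\!\left(\mathtt{A}_n(g),\theta\right)
\leq \omega(g,\theta)
=\omega\!\left(f^{(k)},\theta\right),\qquad \theta>0,
\end{equation*}
which is the claimed inequality. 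Parts~(ii) and~(iii) follow in exactly the same manner, replacing $\mathtt{A}_n$ and Theorem~\ref{theorem6} by $\mathtt{A}^*_n$, $\overline{\mathtt{A}_n}$ and Theorems~\ref{theorem7}, \ref{theorem8}, respectively.

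For the uniform-continuity assertions, I would again use that $\left(\mathtt{A}_n(f)\right)^{(k)}=\mathtt{A}_n(g)$ with $g=f^{(k)}$. Each of Theorems~\ref{theorem6}, \ref{theorem7}, \ref{theorem8} asserts not only the smoothness inequality but also that the operator maps $C_U(\mathbb{R})$ into itself; hence, when $f^{(k)}\in C_U(\mathbb{R})$, we obtain $\mathtt{A}_n(g)\in C_U(\mathbb{R})$, i.e.\ $\left(\mathtt{A}_n(f)\right)^{(k)}\in C_U(\mathbb{R})$, and likewise for the other two operators. The only point deserving a little care is the justification of the commutation identities under the hypothesis $f^{(k)}\in C_B(\mathbb{R})\cup C_U(\mathbb{R})$, since a merely uniformly continuous $f^{(k)}$ need not be bounded: this is handled by differentiating under the integral sign in the convolution form $\mathtt{A}_n(f)(x)=\int_{-\infty}^{\infty} f\!\left(x-\tfrac{h}{n}\right)\Psi(h)\,dh$, where the rapid decay of $\Psi$ quantified in Theorem~\ref{t1} and the finiteness of the moments established in the Proposition legitimize the interchange and guarantee convergence despite at most linear growth of $f^{(k)}$. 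I expect no genuine obstacle to arise beyond this routine verification, as the substance of the result has already been absorbed into Theorems~\ref{theorem6}--\ref{theorem8}.
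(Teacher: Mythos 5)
Your proposal is correct and follows exactly the paper's route: the paper's proof is the one-line reduction ``taking into consideration of Theorems \ref{theorem6}, \ref{theorem7}, \ref{theorem8}'' together with the commutation identities $(\mathtt{A}_n(f))^{(k)}=\mathtt{A}_n(f^{(k)})$ (and analogues) recorded in the preceding Remarks, which is precisely your argument with $g:=f^{(k)}$. Your closing observation --- that the Remarks derive the commutation identities under the hypothesis $f^{(k)}\in C_B(\mathbb{R})$, so the case of a merely uniformly continuous, possibly unbounded $f^{(k)}$ needs a separate justification of differentiation under the integral sign --- is a point the paper silently glosses over, and your sketch of how to handle it via the decay of $\Psi$ is a welcome extra beyond what the paper provides.
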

\begin{proof}
Taking into consideration of Theorems \ref{theorem6}, \ref{theorem7}, \ref{theorem8}, we immediately obtain the proof.
\end{proof}
Now, we improve our results dealing with the improvement of the  of the rate of convergence of our operators by assuming the differentiability of functions.
\begin{theorem}
\label{10}
If  $0<\alpha<1$, $n \in \mathbb{N}:n^{1-\alpha}>2$, $x \in \mathbb{R}$, $f \in C^N (\mathbb{R})$, $N \in \mathbb{N}$, such that $f^{(N)} \in C_B(\mathbb{R})$, then the followings hold:
\begin{itemize}
\item[(i)]
\begin{align*}
\left| ( \mathtt{A}_n (f))(x)-f(x) - \sum_{k=1}^N \frac{f^{(k)}(x)}{k!} \left(\mathtt{A}_n\left( (\cdot -x)^k \right)\right)(x)\right|
\end{align*}
\begin{equation*}
\leq \frac{\omega \left( f^{(N)}, \frac{1}{n^{\alpha}} \right)}{n^{\alpha N}N!}+\frac{2^{N+2} \Vert f^{(N)} \Vert_{\infty}B^{ \beta } \left( q+\frac{1}{q} \right)}{n^N \beta ^N} B^{ \frac{-\beta  n^{1-\alpha}}{2}}\rightarrow 0, ~as~ n\rightarrow  \infty,
\end{equation*}
\item[(ii)]
if $ f^{(k)}(x)=0$, $k=1, 2, \cdots, N$  then
\begin{equation*}
\vert \mathtt{A}_n(f)(x)-f(x)\vert \leq \frac{\omega \left( f^{(N)}, \frac{1}{n^{\alpha}} \right)}{n^{\alpha N}N!}+ \frac{2^{N+2} \\ \Vert f^{(N)} \Vert_{\infty}B^{\beta } \left( q+\frac{1}{q} \right)}{n^N \beta ^N} B^{ \frac{-\beta  n^{1-\alpha}}{2}},
\end{equation*}
\item[(iii)]
\begin{align*}
\vert \mathtt{A}_n(f)(x)-f(x)\vert& \leq \sum_{k=1}^N \frac{\left \vert f^{(k)}(x) \right \vert}{k!} \frac{1}{n^k} \left[ \frac{B^{\beta}-1}{(B^{\beta }+1)(k+1)}  +\frac{ B^{\beta } \left( q+\frac{1}{q} \right) k!}{\beta ^k (\ln B)^k} \right]\\
&+  \frac{\omega \left( f^{(N)}, \frac{1}{n^{\alpha}} \right)}{n^{\alpha N}N!}+ \frac{2^{N+2}  \Vert f^{(N)} \Vert_{\infty}B^{\beta } \left( q+\frac{1}{q} \right)}{n^N \beta^N} B^{ \frac{-\beta  n^{1-\alpha}}{2}},
\end{align*}
\item[(iv)]
\begin{align*}
\Vert \mathtt{A}_n(f)-f\Vert_{\infty}& \leq \sum_{k=1}^N \frac{\left \Vert f^{(k)} \right \Vert_{\infty}}{k!} \frac{1}{n^k} \left[ \frac{B^{\beta }-1}{(B^{\beta}+1)(k+1)}  +\frac{ B^{\beta } \left( q+\frac{1}{q} \right) k!}{\beta ^k (\ln B)^k} \right]\\
&+  \frac{\omega \left( f^{(N)}, \frac{1}{n^{\alpha}} \right)}{n^{\alpha N}N!}+ \frac{2^{N+2}  \Vert f^{(N)} \Vert_{\infty}B^{ \beta } \left( q+\frac{1}{q} \right)}{n^N \beta ^N} B^{ \frac{-\beta  n^{1-\alpha}}{2}}.
\end{align*}
\end{itemize}
\end{theorem}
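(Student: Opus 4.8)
The plan is to combine a Taylor expansion of order $N$ about $x$ with the tail estimate of Theorem \ref{t1}, following the $F_1$/$F_2$ splitting scheme behind Theorems \ref{theorem3}--\ref{theorem5} but now exploiting the smoothness of $f$. First I would write Taylor's formula with integral remainder at the point $v/n$,
\begin{equation*}
f\left(\frac{v}{n}\right)=\sum_{k=0}^N \frac{f^{(k)}(x)}{k!}\left(\frac{v}{n}-x\right)^k+\mathcal{R}(v),
\end{equation*}
where, after adding and subtracting $f^{(N)}(x)$ inside the remainder, one has
\begin{equation*}
\mathcal{R}(v)=\frac{1}{(N-1)!}\int_x^{v/n}\left(\frac{v}{n}-t\right)^{N-1}\left(f^{(N)}(t)-f^{(N)}(x)\right)dt.
\end{equation*}
Applying $\mathtt{A}_n$, using $\int_{-\infty}^\infty\Psi(nx-v)\,dv=1$ and $\mathtt{A}_n\left((\cdot-x)^k\right)(x)=\int_{-\infty}^\infty (v/n-x)^k\Psi(nx-v)\,dv$, the polynomial part reproduces exactly the subtracted sum in (i), so the left-hand side of (i) equals $\int_{-\infty}^\infty \mathcal{R}(v)\,\Psi(nx-v)\,dv$. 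Everything then reduces to estimating this single remainder integral.

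For the remainder I would use $\lvert f^{(N)}(t)-f^{(N)}(x)\rvert\leq\omega(f^{(N)},\lvert t-x\rvert)$ and integrate, obtaining the pointwise estimate $\lvert\mathcal{R}(v)\rvert\leq \frac{1}{N!}\,\omega\!\left(f^{(N)},\lvert v/n-x\rvert\right)\lvert v/n-x\rvert^N$. I would then split the $v$-integral over the sets $F_1$ and $F_2$ used earlier. On $F_1=\{\,|v/n-x|<n^{-\alpha}\}$, monotonicity of the modulus together with $\int\Psi=1$ gives the first term $\frac{\omega(f^{(N)},n^{-\alpha})}{N!\,n^{\alpha N}}$. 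On $F_2=\{\,|v/n-x|\geq n^{-\alpha}\}$ I would instead use the crude bound $\lvert f^{(N)}(t)-f^{(N)}(x)\rvert\leq 2\Vert f^{(N)}\Vert_\infty$, reducing the task, after the substitution $h=nx-v$, to controlling $\frac{1}{n^N}\int_{|h|\geq n^{1-\alpha}}|h|^N\Psi(h)\,dh$.

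The main obstacle is this tail moment. Here I would reuse the pointwise bound $\Psi(h)<\tfrac12(q+\tfrac1q)\beta(\ln B)B^{-\beta(|h|-1)}$ established inside the proof of Theorem \ref{t1}, which reduces the problem to $\int_{n^{1-\alpha}}^\infty h^N B^{-\beta h}\,dh$. The decisive trick is to split the exponential, writing $B^{-\beta h}=B^{-\beta h/2}B^{-\beta h/2}$ and using $B^{-\beta h/2}\leq B^{-\beta n^{1-\alpha}/2}$ throughout the range of integration; pulling this factor out and enlarging the remaining integral to $[0,\infty)$, where $\int_0^\infty h^N B^{-\beta h/2}\,dh=\frac{2^{N+1}N!}{(\beta\ln B)^{N+1}}$, produces exactly the decay factor $B^{-\beta n^{1-\alpha}/2}$ and, after collecting constants, the bound displayed in (i). Adding the $F_1$ and $F_2$ contributions proves (i), and since both terms tend to $0$ (the first because $\omega(f^{(N)},n^{-\alpha})\to0$, the second because of the exponential factor) the convergence assertion follows.

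Finally, (ii) is the special case of (i) in which the subtracted sum vanishes because $f^{(k)}(x)=0$ for $1\leq k\leq N$. For (iii) I would apply the triangle inequality to (i), bounding each coefficient by $\lvert\mathtt{A}_n((\cdot-x)^k)(x)\rvert\leq\frac{1}{n^k}\int_{-\infty}^\infty|h|^k\Psi(h)\,dh$ after the same substitution $h=nx-v$, and then invoking the Proposition to insert the bracketed moment estimate. Statement (iv) follows by taking the supremum over $x\in\mathbb{R}$ in (iii), using $\Vert f^{(k)}\Vert_\infty=\sup_x|f^{(k)}(x)|$, since every remaining constant is independent of $x$.
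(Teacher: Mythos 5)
Your proposal is correct and follows essentially the same route as the paper's proof: Taylor's formula with the integral remainder rewritten around $f^{(N)}(t)-f^{(N)}(x)$ so that the order-$N$ polynomial part reproduces the subtracted sum, the split of the remainder integral into $\{|v/n-x|<n^{-\alpha}\}$ and its complement, the modulus bound giving $\frac{\omega(f^{(N)},n^{-\alpha})}{n^{\alpha N}N!}$ on the near set, the crude $2\Vert f^{(N)}\Vert_{\infty}$ bound plus a halved-exponent trick on the tail, the Proposition's moment estimate for (iii), and taking the supremum over $x$ for (iv). One accounting point: your evaluation $\int_0^{\infty}h^N B^{-\beta h/2}\,dh=\frac{2^{N+1}N!}{(\beta\ln B)^{N+1}}$, after collecting constants, actually yields the tail bound $\frac{2^{N+2}\Vert f^{(N)}\Vert_{\infty}B^{\beta}\left(q+\frac{1}{q}\right)}{n^{N}\beta^{N}(\ln B)^{N}}\,B^{-\beta n^{1-\alpha}/2}$, i.e.\ an extra factor $(\ln B)^{-N}$, so it matches the displayed constant only when $\ln B\geq 1$; the paper hides exactly the same factor by invoking $t^{N}\leq 2^{N}N!\,B^{t/2}$, which in general should read $t^{N}\leq \frac{2^{N}N!}{(\ln B)^{N}}B^{t/2}$ and is likewise valid only for $B\geq e$. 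So your version is, if anything, the more careful implementation of the identical idea, and the discrepancy is inherited from the theorem's stated constant rather than a gap in your argument.
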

\begin{proof}
Since it is already known that 
\begin{equation*}
f \left(  \frac{v}{n}\right)=\sum_{k=0}^{N} \frac{f^{(k)} (x)}{k!} \left( \frac{v}{n}-x \right)^k+\int_x^{\frac{v}{n}} \left( f^{(N)}(t)-f^{(N)}(x)\right)\frac{ \left(\frac{v}{n}-t \right)^{N-1}}{(N-1)!}dt
\end{equation*}
and 
\begin{align*}
f \left(  \frac{v}{n}\right) \Psi(nx-v)& =\sum_{k=0}^{N} \frac{f^{(k)} (x)}{k!}  \Psi(nx-v) \left( \frac{v}{n}-x \right)^k\\
&+ \Psi(nx-v)\int_x^{\frac{v}{n}} \left( f^{(N)}(t)-f^{(N)}(x)\right)\frac{ \left(\frac{v}{n}-t \right)^{N-1}}{(N-1)!}dt,
\end{align*}
 we can write that
\begin{align*}
\mathtt{A}_n(f)(x)&=\int_{-\infty}^{\infty} f \left(  \frac{v}{n}\right) \Psi(nx-v)dv \\
&=\sum_{k=0}^{N} \frac{f^{(k)} (x)}{k!} \int_{-\infty}^{\infty} \Psi(nx-v) \left( \frac{v}{n}-x \right)^k dv\\
&+\int_{-\infty}^{\infty} \Psi(nx-v) \left( \int_x^{\frac{v}{n}} \left( f^{(N)}(t)-f^{(N)}(x)\right)\frac{ \left(\frac{v}{n}-t \right)^{N-1}}{(N-1)!}dt\right) dv
\end{align*}
and
\begin{align*}
 \mathtt{A}_n(f)(x)-f(x)&=\sum_{k=1}^{N} \frac{f^{(k)} (x)}{k!} \int_{-\infty}^{\infty} \Psi(nx-v) \left( \frac{v}{n}-x \right)^k dv\\
&+\int_{-\infty}^{\infty} \Psi(nx-v) \left( \int_x^{\frac{v}{n}} \left( f^{(N)}(t)-f^{(N)}(x)\right)\frac{ \left(\frac{v}{n}-t \right)^{N-1}}{(N-1)!}dt\right) dv.
\end{align*}
Let 
\begin{equation*}
R_n(x):=\int_{-\infty}^{\infty} \Psi(nx-v) \left( \int_x^{\frac{v}{n}} \left( f^{(N)}(t)-f^{(N)}(x)\right)\frac{ \left(\frac{v}{n}-t \right)^{N-1}}{(N-1)!}dt\right) dv
\end{equation*}
and 
\begin{equation*}
\gamma(v):= \int_x^{\frac{v}{n}} \left( f^{(N)}(t)-f^{(N)}(x)\right)\frac{ \left(\frac{v}{n}-t \right)^{N-1}}{(N-1)!}dt.
\end{equation*}
Now, we have two cases: \begin{itemize} \item[1.] 
$  \left \vert \frac{v}{n}-x  \right \vert< \frac{1}{n^\alpha},$\\
\item[2.] $  \left \vert \frac{v}{n}-x  \right \vert \geq \frac{1}{n^\alpha}.$
\end{itemize} 
Let us start with Case 1.\\
Case 1(i): When $ \frac{v}{n}\geq x$ we have 
\begin{align*}
|\gamma(v)|&
 \leq \omega \left( f^{(N)}, \frac{1}{n^{\alpha}} \right) \left(\int_x^{\frac{v}{n}} \frac{ \left(\frac{v}{n}-t \right)^{N-1}}{(N-1)!}dt\right) \\
&=\omega \left( f^{(N)}, \frac{1}{n^{\alpha}} \right) \frac{ \left(\frac{v}{n}-x\right)^{N}}{N!}\leq \omega \left( f^{(N)}, \frac{1}{n^{\alpha}} \right) \frac{1}{n^{\alpha N}N!}.
\end{align*}
Case 1 (ii): When $ \frac{v}{n} < x$, we have  
\begin{align*}
|\gamma(v)|&=  
\omega \left( f^{(N)}, \frac{1}{n^{\alpha}} \right) \frac{ \left(x-\frac{v}{n}\right)^{N}}{N!}\leq \omega \left( f^{(N)}, \frac{1}{n^{\alpha}} \right) \frac{ 1}{n^{\alpha N}N!}.
\end{align*}
Therefore $ |\gamma(v)| \leq \omega \left( f^{(N)}, \frac{1}{n^{\alpha}} \right) \frac{ 1}{n^{\alpha N}N!}$ holds whenever  $  \left \vert \frac{v}{n}-x  \right \vert< \frac{1}{n^\alpha} $.
Then we have that
\begin{equation*}
\left \vert \int_{ \left \vert \frac{v}{n}-x  \right \vert< \frac{1}{n^\alpha} } \Psi(nx-v) \left( \int_x^{\frac{v}{n}} \left( f^{(N)}(t)-f^{(N)}(x)\right)\frac{ \left(\frac{v}{n}-t \right)^{N-1}}{(N-1)!}dt\right) dv \right \vert 
\end{equation*}
\begin{equation*}
\leq \int_{ \left \vert \frac{v}{n}-x  \right \vert< \frac{1}{n^\alpha} } \Psi(nx-v)  |\gamma(v)| dv \leq  \omega \left( f^{(N)}, \frac{1}{n^{\alpha}} \right) \frac{ 1}{n^{\alpha N}N!}.
\end{equation*}
Case 2: In this case, we can write that
\begin{align*}
&\left \vert \int_{ \left \vert \frac{v}{n}-x  \right \vert\geq \frac{1}{n^\alpha} } \Psi(nx-v) \left( \int_x^{\frac{v}{n}} \left( f^{(N)}(t)-f^{(N)}(x)\right)\frac{ \left(\frac{v}{n}-t \right)^{N-1}}{(N-1)!}dt\right) dv \right \vert \\
&\leq  \int_{ \left \vert \frac{v}{n}-x  \right \vert\geq \frac{1}{n^\alpha} } \Psi(nx-v)\left \vert \int_x^{\frac{v}{n}} \left( f^{(N)}(t)-f^{(N)}(x)\right)\frac{ \left(\frac{v}{n}-t \right)^{N-1}}{(N-1)!}dt\right \vert dv \\
& =\int_{ \left \vert \frac{v}{n}-x  \right \vert\geq \frac{1}{n^\alpha} } \Psi(nx-v) |\gamma(v)|:= \mathtt{P}.
\end{align*}
If $\frac{v}{n} \geq x$ then $ |\gamma(v)|\leq 2\left \Vert f^{(N)} \right \Vert_{\infty} \frac{ \left(\frac{v}{n}-x \right)^{N}}{N!}  $ and 
if  $\frac{v}{n} < x$ then $ |\gamma(v)|\leq 2\left \Vert f^{(N)} \right \Vert_{\infty} \frac{ \left(x-\frac{v}{n} \right)^{N}}{N!}  $. So by combining them, we can also write  $ |\gamma(v)|\leq 2\left \Vert f^{(N)} \right \Vert_{\infty} \frac{ \left \vert \frac{v}{n}-x \right \vert^{N}}{N!} $
and by using Theorem \ref{t1} that
\begin{align*}
\mathtt{P}&\leq   \frac{2\left \Vert f^{(N)} \right \Vert_{\infty}}{N!}  \int_{ \left \vert \frac{v}{n}-x  \right \vert\geq \frac{1}{n^\alpha} } \Psi(nx-v) \left \vert x-\frac{v}{n} \right \vert^N dv\\
&= \frac{2\left \Vert f^{(N)} \right \Vert_{\infty}}{n^N N!}  \int_{ \left \vert nx-v \right \vert \geq n^{1-\alpha }} \Psi(|nx-v|) \left \vert nx-v \right \vert^N dv\\
&\leq \frac{2\left \Vert f^{(N)} \right \Vert_{\infty}}{n^N N!} \left(q +\frac{1}{q} \right)\beta  (\ln B) \int_{n^{1-\alpha}}^{\infty} B^{-\beta (x-1)} x^N dx\\
&= \frac{2\left \Vert f^{(N)} \right \Vert_{\infty}}{n^N N!} \left(q +\frac{1}{q} \right) \frac{(\ln B) B^\beta }{\beta ^N} \int_{n^{1-\alpha}}^{\infty} B^{-\beta  x} (\beta  x)^N (\beta  dx)\\
& =\frac{2\left \Vert f^{(N)} \right \Vert_{\infty}}{n^N N!} \left(q +\frac{1}{q} \right) \frac{(\ln B) B^\beta }{\beta^N}  \int_{\beta  n^{1-\alpha}}^{\infty} B^{-t} t^N dt\\
&\leq \frac{2\left \Vert f^{(N)} \right \Vert_{\infty}}{n^N N!} \left(q +\frac{1}{q} \right) \frac{(\ln B) B^\beta}{\beta ^N}  2^N N!\int_{\beta  n^{1-\alpha}}^{\infty} B^{-t} B^{\frac{t}{2}}dt\\
&=\frac{2^{N+1}\left \Vert f^{(N)} \right \Vert_{\infty} \left(q +\frac{1}{q} \right)}{n^N } \frac{(\ln B) B^\beta }{\beta ^N} \int_{\beta  n^{1-\alpha}}^{\infty}  B^{-\frac{t}{2}} dt\\
&=\frac{2^{N+1}\left \Vert f^{(N)} \right \Vert_{\infty} \left(q +\frac{1}{q} \right)}{n^N}  \frac{(\ln B) B^\beta }{\beta ^N} (-2)\frac{B^{-\frac{t}{2}}}{(\ln B)} \bigg\vert^{\infty}_{\beta  n^{1-\alpha}}\\
&=\frac{2^{N+2}  \Vert f^{(N)} \Vert_{\infty}B^{\beta } \left( q+\frac{1}{q} \right)}{n^N \beta ^N} B^{ \frac{-\beta  n^{1-\alpha}}{2}}.
\end{align*}
Then we get that  
\begin{equation*}
\left \vert \int_{ \left \vert \frac{v}{n}-x  \right \vert\geq \frac{1}{n^\alpha} } \Psi(nx-v) \left( \int_x^{\frac{v}{n}} \left( f^{(N)}(t)-f^{(N)}(x)\right)\frac{ \left(\frac{v}{n}-t \right)^{N-1}}{(N-1)!}dt\right) dv \right \vert
\end{equation*}
\begin{equation*}
\leq \frac{2^{N+2} \left \Vert f^{(N)} \right \Vert_{\infty} \left( q+ \frac{1}{q}\right) B^{\beta } B^{-\beta  n^{ \frac{1-\alpha}{2}}}}{n^N \beta ^N}\rightarrow0,~n\rightarrow \infty.
\end{equation*}
Finally we conclude that, 
\begin{align*}
\left \vert R_n(x) \right \vert &\leq \left \vert \int_{ \left \vert \frac{v}{n}-x  \right \vert< \frac{1}{n^\alpha} } \Psi(nx-v) \left( \int_x^{\frac{v}{n}} \left( f^{(N)}(t)-f^{(N)}(x)\right)\frac{ \left(\frac{v}{n}-t \right)^{N-1}}{(N-1)!}dt\right) dv \right \vert \\
&+\left \vert \int_{ \left \vert \frac{v}{n}-x  \right \vert\geq \frac{1}{n^\alpha} } \Psi(nx-v) \left( \int_x^{\frac{v}{n}} \left( f^{(N)}(t)-f^{(N)}(x)\right)\frac{ \left(\frac{v}{n}-t \right)^{N-1}}{(N-1)!}dt\right) dv \right \vert \\
&\leq \omega \left( f^{(N)}, \frac{1}{n^{\alpha}} \right) \frac{ 1}{n^{\alpha N}N!}+ \frac{2^{N+2} \left \Vert f^{(N)} \right \Vert_{\infty} \left( q+ \frac{1}{q}\right) B^{\beta } B^{-\beta  n^{ \frac{1-\alpha}{2}}}}{n^N \beta ^N}\rightarrow0,~n\rightarrow \infty.
\end{align*}
Now letting $k=1, 2, \cdots, N$ and $h=nx-v$ then we obtain that
\begin{align*}
\left \vert \mathtt{A}_n \left( ( \cdot-x)^k\right)(x) \right \vert& =\left \vert  \int_{-\infty}^{\infty} \Psi(nx-v) \left( \frac{v}{n}-x \right)^k dv \right \vert\\
& \leq \int_{-\infty}^{\infty}  \left \vert \frac{v}{n}-x  \right \vert^k \Psi(nx-v)dv\\
& =\frac{1}{n^k} \int_{-\infty}^{\infty}  \left \vert nx-v \right \vert^k \Psi(nx-v)dv=\frac{1}{n^k} \int_{-\infty}^{\infty}  \left \vert h \right \vert^k \Psi(h)dh\\
&\leq \frac{1}{n^k} \left[  \frac{B^{\beta }-1}{(B^{\beta }+1)(k+1)}  +\frac{ B^{\beta } \left( q+\frac{1}{q} \right) k!}{\beta ^k (\ln B)^k}  \right]\rightarrow 0, ~as ~n\rightarrow \infty.
\end{align*}
Hence we complete the proof. 
\end{proof}
\begin{theorem}
\label{11}
If $0<\alpha<1$, $n \in \mathbb{N}:n^{1-\alpha}>2$, $x \in \mathbb{R}$, $f \in C^{N}(\mathbb{R})$, $N \in \mathbb{N}$ such that $f^{(N)} \in C_B(\mathbb{R})$, then the  followings hold:
\begin{itemize}
\item[(i)]
\begin{align*}
\left| \mathtt{A}^*_n (f)(x)-f(x) - \sum_{k=1}^N \frac{f^{(k)}(x)}{k!} \left(\mathtt{A}^*_n\left( (\cdot -x)^k \right) \right)(x)\right|
\end{align*}
\begin{align*}
&\leq  \omega \left(f^{(N)}, \frac{1}{n}+\frac{1}{n^{\alpha}}\right) \frac{\left(  \frac{1}{n}+\frac{1}{n^{\alpha}}  \right)^N}{N!}\\
&+ \frac{ 2^N\left \Vert f^{(N)} \right \Vert_{\infty}}{n^N N!} \left( q+\frac{1}{q} \right) B^{\beta }  \left[ 1+  \frac{ 2^{N+1} N!}{\beta ^{N}} \right]  B^{-\frac{\beta  n^{1-\alpha}}{2}}\rightarrow 0,~n\rightarrow \infty,
\end{align*}
\item[(ii)]
if $ f^{(k)}(x)=0$, $k=1, 2, \cdots, N$ then 
\begin{align*}
\vert \mathtt{A}^*_n(f)(x)-f(x)\vert &\leq  \omega \left(f^{(N)}, \frac{1}{n}+\frac{1}{n^{\alpha}}\right) \frac{\left(   \frac{1}{n}+\frac{1}{n^{\alpha}} \right)^N}{ N!}\\
&+  \frac{ 2^N\left \Vert f^{(N)} \right \Vert_{\infty}}{ n^N N!} \left( q+\frac{1}{q} \right) B^{\beta }  \left[ 1+  \frac{ 2^{N+1} N!}{\beta ^{N}} \right]  B^{-\frac{\beta  n^{1-\alpha}}{2}},
\end{align*}
\item[(iii)]
\begin{align*}
\vert \mathtt{A}^*_n(f)(x)-f(x)\vert& \leq \sum_{k=1}^N \frac{\left \vert f^{(k)}(x)\right \vert}{k!} \frac{2^{k-1}}{n^k} \left[1+  \frac{ B^{\beta }-1}{(B^{\beta }+1)(k+1)}  +\frac{ B^{\beta } \left( q+\frac{1}{q} \right) k!}{\beta ^k (\ln B)^k}  \right] \\
&+  \omega \left(f^{(N)}, \frac{1}{n}+\frac{1}{n^{\alpha}}\right) \frac{\left(   \frac{1}{n}+\frac{1}{n^{\alpha}} \right)^N}{N!}\\
&+ \frac{ 2^N\left \Vert f^{(N)} \right \Vert_{\infty}}{n^N N!} \left( q+\frac{1}{q} \right) B^{\beta }  \left[ 1+  \frac{ 2^{N+1} N!}{\beta ^{N}} \right]  B^{-\frac{\beta  n^{1-\alpha}}{2}},
\end{align*}
\item[(iv)]
\begin{align*}
\Vert \mathtt{A}^*_n(f)-f\Vert_{\infty}& \leq \sum_{k=1}^N \frac{\left\Vert f^{(k)}\right \Vert_{\infty}}{k!} \frac{2^{k-1}}{n^k} \left[1+  \frac{ B^{\beta }-1}{(B^{\beta }+1)(k+1)}  +\frac{ B^{\beta } \left( q+\frac{1}{q} \right) k!}{\beta ^k (\ln B)^k}  \right] \\
&+ \omega \left(f^{(N)}, \frac{1}{n}+\frac{1}{n^{\alpha}}\right) \frac{\left(   \frac{1}{n}+\frac{1}{n^{\alpha}} \right)^N}{N!}\\
&+ \frac{ 2^N\left \Vert f^{(N)} \right \Vert_{\infty}}{n^N N!} \left( q+\frac{1}{q} \right) B^{\beta }  \left[ 1+  \frac{ 2^{N+1} N!}{\beta ^{N}} \right]  B^{-\frac{\beta  n^{1-\alpha}}{2}}.
\end{align*}
\end{itemize}
\end{theorem}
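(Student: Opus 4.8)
The plan is to follow the proof of Theorem~\ref{10} verbatim in structure, carrying the additional inner Kantorovich integration $\int_0^{1/n}(\cdot)\,dt$ through each estimate. First I would fix $x$ and, for each $h$, write the order-$N$ Taylor formula with integral remainder
\[
f(h)=\sum_{k=0}^{N}\frac{f^{(k)}(x)}{k!}(h-x)^k+\int_x^{h}\bigl(f^{(N)}(t)-f^{(N)}(x)\bigr)\frac{(h-t)^{N-1}}{(N-1)!}\,dt .
\]
Substituting $h=t+\tfrac{v}{n}$, multiplying by $\Psi(nx-v)$, and integrating in $t$ over $[0,1/n]$ and in $v$ over $\mathbb{R}$ (using $\int_{-\infty}^{\infty}\Psi(nx-v)\,dv=1$ and $n\int_0^{1/n}dt=1$) gives
\[
\mathtt{A}^*_n(f)(x)-f(x)=\sum_{k=1}^N\frac{f^{(k)}(x)}{k!}\bigl(\mathtt{A}^*_n((\cdot-x)^k)\bigr)(x)+R^*_n(x),
\]
so the task splits into estimating the moment factors $\mathtt{A}^*_n((\cdot-x)^k)(x)$ and the remainder $R^*_n(x)$.

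For the moments I would bound $|t+\tfrac{v}{n}-x|^k\le 2^{k-1}\bigl(|t|^k+|\tfrac{v}{n}-x|^k\bigr)$. On $[0,1/n]$ the $|t|^k$ piece contributes at most $n^{-k}$ (this is the source of the ``$1$'' inside the brackets in (iii) and (iv)), while the $|\tfrac{v}{n}-x|^k$ piece becomes $\tfrac{1}{n^k}\int_{-\infty}^{\infty}|h|^k\Psi(h)\,dh$ after the substitution $h=nx-v$ and is estimated by the Proposition. Together these yield the factor $\frac{2^{k-1}}{n^k}\bigl[1+\tfrac{B^\beta-1}{(B^\beta+1)(k+1)}+\tfrac{B^\beta(q+1/q)k!}{\beta^k(\ln B)^k}\bigr]$.

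For $R^*_n(x)$ I would split the $v$-integral exactly as in Theorem~\ref{10}, into the near set $\{|\tfrac{v}{n}-x|<\tfrac{1}{n^\alpha}\}$ and the far set $\{|\tfrac{v}{n}-x|\ge\tfrac{1}{n^\alpha}\}$. The one genuinely new feature is that $h$ now ranges over an interval of length $1/n$ about $v/n$, so $|h-x|\le\tfrac{1}{n}+\tfrac{1}{n^\alpha}$ on the near set; estimating the inner remainder by $\omega\!\left(f^{(N)},|h-x|\right)\frac{|h-x|^N}{N!}$ therefore produces the term $\omega\!\left(f^{(N)},\tfrac1n+\tfrac1{n^\alpha}\right)\frac{(1/n+1/n^\alpha)^N}{N!}$. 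On the far set I would use the crude bound $|\gamma|\le 2\|f^{(N)}\|_\infty\frac{|h-x|^N}{N!}$ together with $|h-x|^N\le 2^{N-1}(|t|^N+|\tfrac{v}{n}-x|^N)$: the $|t|^N$ piece is controlled by Theorem~\ref{t1} and contributes the ``$1$'' in $\bigl[1+\tfrac{2^{N+1}N!}{\beta^N}\bigr]$, while the $|\tfrac{v}{n}-x|^N$ piece is pushed through the same polynomial-versus-exponential domination $t^N\le C_N B^{t/2}$ used in Theorem~\ref{10}, producing the decay $B^{-\beta n^{1-\alpha}/2}$ and the companion factor $\tfrac{2^{N+1}N!}{\beta^N}$.

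Finally, (i) is the combination of the two remainder estimates just described; (ii) follows by setting $f^{(k)}(x)=0$ for $k=1,\dots,N$, which kills the moment sum; (iii) follows by inserting the moment bound into the sum and applying the triangle inequality; and (iv) follows by taking the supremum over $x\in\mathbb{R}$. The main obstacle is purely one of bookkeeping: the extra inner integration enlarges the modulus argument from $\tfrac{1}{n^\alpha}$ to $\tfrac1n+\tfrac1{n^\alpha}$ and, via the convexity inequality $(a+b)^N\le 2^{N-1}(a^N+b^N)$, injects the additive ``$1$'' and the factor $\tfrac{2^{N+1}N!}{\beta^N}$ into the brackets. Reproducing these constants exactly is the delicate step, but every underlying estimate is already available from Theorem~\ref{t1}, the Proposition, and the computation in Theorem~\ref{10}.
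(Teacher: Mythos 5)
Your proposal is correct and follows essentially the same route as the paper's proof: the same Taylor formula with integral remainder carried under the inner Kantorovich integral, the same split at $\left|\frac{v}{n}-x\right|=\frac{1}{n^{\alpha}}$ giving the modulus term $\omega\left(f^{(N)},\frac{1}{n}+\frac{1}{n^{\alpha}}\right)\frac{\left(\frac{1}{n}+\frac{1}{n^{\alpha}}\right)^{N}}{N!}$, the same moment bound $\frac{2^{k-1}}{n^{k}}\left[1+\int_{-\infty}^{\infty}|h|^{k}\Psi(h)\,dh\right]$ via the Proposition, and the same far-set treatment through the pointwise exponential bound on $\Psi$ and the domination $t^{N}\le 2^{N}N!\,B^{t/2}$. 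The only cosmetic difference is that you invoke Theorem \ref{t1} for the ``$1$'' contribution on the far set, whereas the paper recomputes that tail integral inline and then weakens $B^{-\beta n^{1-\alpha}}$ to $B^{-\frac{\beta n^{1-\alpha}}{2}}$ to factor out the bracket $\left[1+\frac{2^{N+1}N!}{\beta^{N}}\right]$, with identical constants either way.
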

\begin{proof}
Since it is already known that
\begin{equation*}
f \left(  t+\frac{v}{n}\right)=\sum_{k=0}^{N} \frac{f^{(k)} (x)}{k!} \left( t+\frac{v}{n}-x \right)^k
+\int_x^{t+\frac{v}{n}} \left( f^{(N)}(s)-f^{(N)}(x)\right)\frac{ \left(t+\frac{v}{n}-s \right)^{N-1}}{(N-1)!}ds
\end{equation*}
and
\begin{align*}
\int_{0}^{\frac{1}{n}}f \left(  t+\frac{v}{n}\right)dt& =\sum_{k=0}^{N} \frac{f^{(k)} (x)}{k!} \int_{0}^{\frac{1}{n}} \left( t+\frac{v}{n}-x \right)^kdt+\int_{0}^{\frac{1}{n}}\left(\int_x^{t+\frac{v}{n}} \left( f^{(N)}(s)-f^{(N)}(x)\right)\frac{ \left(t+\frac{v}{n}-s \right)^{N-1}}{(N-1)!}ds\right)dt,
\end{align*}
we can write that 
\begin{align*}
&n \int_{-\infty}^{\infty} \left( \int_{0}^{\frac{1}{n}}f \left( t+ \frac{v}{n}\right)dt \right) \Psi(nx-v)dv\\
&=\sum_{k=0}^{N} \frac{f^{(k)} (x)}{k!} n \int_{-\infty}^{\infty} \left( \int_{0}^{\frac{1}{n}}f \left( t+ \frac{v}{n}-x \right)^k dt \right) \Psi(nx-v)dv\\
&+n \int_{-\infty}^{\infty}  \left( \int_{0}^{\frac{1}{n}} \left(  \int_x^{t+\frac{v}{n}} \left( f^{(N)}(s)-f^{(N)}(x)\right)\frac{ \left(t+\frac{v}{n}-s \right)^{N-1}}{(N-1)!}ds\right)dt \right)\Psi(nx-v)dv
\end{align*}
and 
\begin{align*}
 \mathtt{A}^*_n(f)(x)-f(x)&=\sum_{k=1}^{N} \frac{f^{(k)} (x)}{k!} \mathtt{A}^*_n \left( (\cdot -x)^k \right)(x)+R_n(x).
\end{align*}
Here, let
\begin{equation*}
R_n(x):= n \int_{-\infty}^{\infty}  \left( \int_{0}^{\frac{1}{n}} \left(  \int_x^{t+\frac{v}{n}} \left( f^{(N)}(s)-f^{(N)}(x)\right)\frac{ \left(t+\frac{v}{n}-s \right)^{N-1}}{(N-1)!}ds\right)dt\right) \Psi(nx-v)dv
\end{equation*}
and 
\begin{equation*}
\gamma(v):= n\int_{0}^{\frac{1}{n}} \left(  \int_x^{t+\frac{v}{n}} \left( f^{(N)}(s)-f^{(N)}(x)\right)\frac{ \left(t+\frac{v}{n}-s \right)^{N-1}}{(N-1)!}ds\right)dt.
\end{equation*}
Now we have two cases : \begin{itemize}
\item[1.] $ \left \vert \frac{v}{n}-x  \right \vert < \frac{1}{n^\alpha}, $
\item[2.] $ \left \vert \frac{v}{n}-x  \right \vert \geq \frac{1}{n^\alpha}. $
\end{itemize} 
Let us start with Case 1. 
\begin{itemize}
\item[Case 1 (i):]
 When $ t+\frac{v}{n} \geq x $, we have
\begin{align*}
|\gamma(v)| &\leq n \int_{0}^{\frac{1}{n}} \left(  \int_x^{t+\frac{v}{n}} \left\vert f^{(N)}(s)-f^{(N)}(x)\right \vert \frac{ \left( t+\frac{v}{n}-s \right)^{N-1}}{(N-1)!}ds \right)dt \\
& \leq n \int_{0}^{ \frac{1}{n}} \omega \left( f^{(N)}, |t|+\left \vert \frac{v}{n}-x \right \vert \right) \left(  \int_x^{t+\frac{v}{n}}  \frac{ \left(t+\frac{v}{n}-s \right)^{N-1}}{(N-1)!}ds \right)dt\\
&\leq \omega \left( f^{(N)}, \frac{1}{n}+\frac{1}{n^{\alpha}} \right) n  \int_{0}^{\frac{1}{n}}  \frac{\left( |t|+\left \vert \frac{v}{n}-x \right \vert \right)^N }{N!} dt \\
&\leq \frac{\omega \left(f^{(N)}, \frac{1}{n}+\frac{1}{n^{\alpha}}\right)}{N!} \left( \frac{1}{n}+\frac{1}{n^{\alpha}}\right)^N.
\end{align*}
\item[Case 1 (ii):]
When $ t+\frac{v}{n} < x $, we have
\begin{equation*}
|\gamma(v)|= n \left \vert \int_{0}^{\frac{1}{n}} \left(  \int_x^{t+\frac{v}{n}} \left( f^{(N)}(s)-f^{(N)}(x)\right) \frac{ \left( \left(t+\frac{v}{n} \right)-s\right)^{N-1}}{(N-1)!}ds\right)dt \right \vert
\end{equation*}
\begin{align*}
&\leq n\int_{0}^{\frac{1}{n}} \left(  \int^x_{t+\frac{v}{n}} \left\vert f^{(N)}(s)-f^{(N)}(x)\right \vert \frac{ \left(s- \left(t+\frac{v}{n} \right)\right)^{N-1}}{(N-1)!}ds\right)dt\\
&\leq n \int_{0}^{\frac{1}{n}} \omega \left( f^{(N)}, |t|+\left \vert \frac{v}{n}-x \right \vert \right)\left(  \int^x_{t+\frac{v}{n}} \frac{ \left(s- \left(t+\frac{v}{n} \right)\right)^{N-1}}{(N-1)!}ds\right)dt\\
&\leq \omega \left( f^{(N)}, \frac{1}{n}+\frac{1}{n^{\alpha}} \right) n  \int_{0}^{\frac{1}{n}} \frac{ \left(x- \left(t+\frac{v}{n} \right)\right)^{N}}{N!}dt\\
& \leq \omega \left( f^{(N)}, \frac{1}{n}+\frac{1}{n^{\alpha}} \right) n  \int_{0}^{\frac{1}{n}} \frac{ \left(\frac{1}{n}+\frac{1}{n^{\alpha}} \right)^{N}}{N!}dt \\
&=\omega \left( f^{(N)}, \frac{1}{n}+\frac{1}{n^{\alpha}} \right)    \frac{ \left(\frac{1}{n}+\frac{1}{n^{\alpha}} \right)^{N}}{N!}.
\end{align*}
\end{itemize}
By considering the above inequalities, we find that
\begin{equation*}
|\gamma(v)|\leq \omega \left( f^{(N)}, \frac{1}{n}+\frac{1}{n^{\alpha}} \right)    \frac{ \left(\frac{1}{n}+\frac{1}{n^{\alpha}} \right)^{N}}{N!}.
\end{equation*}
Then we conclude that 
\begin{equation*}
\left \vert n \int^{\infty}_{\substack{-\infty \\ \left \vert \frac{v}{n}-x \right \vert<\frac{1}{n^{\alpha}}}}\left( \int_{0}^{\frac{1}{n}} \left(  \int_x^{t+\frac{v}{n}} \left( f^{(N)}(s)-f^{(N)}(x)\right)\frac{ \left(t+\frac{v}{n}-s \right)^{N-1}}{(N-1)!}ds\right)dt\right) \Psi(nx-v)dv \right \vert
\end{equation*}
\begin{equation*}
\leq \omega \left( f^{(N)}, \frac{1}{n}+\frac{1}{n^{\alpha}} \right)    \frac{ \left(\frac{1}{n}+\frac{1}{n^{\alpha}} \right)^{N}}{N!}.
\end{equation*}
\begin{itemize}
\item[Case 2:]In this case,  we can write that
\end{itemize}
\begin{equation*}
\left \vert  \int_{\left \vert \frac{v}{n}-x \right \vert\geq\frac{1}{n^{\alpha}} } n \left( \int_{0}^{\frac{1}{n}} \left(  \int_x^{t+\frac{v}{n}} \left( f^{(N)}(s)-f^{(N)}(x)\right)\frac{ \left(t+\frac{v}{n}-s \right)^{N-1}}{(N-1)!}ds\right)dt\right) \Psi(nx-v)dv \right \vert
\end{equation*}
\begin{equation*}
\leq  \int_{\left \vert \frac{v}{n}-x \right \vert\geq\frac{1}{n^{\alpha}} } \left \vert n \left(  \int_{0}^{\frac{1}{n}} \left(  \int_x^{t+\frac{v}{n}} \left( f^{(N)}(s)-f^{(N)}(x)\right)\frac{ \left(t+\frac{v}{n}-s \right)^{N-1}}{(N-1)!}ds\right)d t\right)  \right\vert \Psi(nx-v)dv =:\mathtt{\Gamma} 
\end{equation*}
and
\begin{equation*}
|\gamma(v)|\leq n  \int_{0}^{\frac{1}{n}}\left \vert    \int_x^{t+\frac{v}{n}} \left( f^{(N)}(s)-f^{(N)}(x)\right) \frac{ \left(t+\frac{v}{n}-s \right)^{N-1}}{(N-1)!}ds \right \vert dt.
\end{equation*}
Now, if $t+\frac{v}{n}\geq x$, then
\begin{align*}
|\gamma(v)|\leq & 2 \left \Vert f^{(N)} \right \Vert_{\infty} n \int_0^\frac{1}{n} \frac{ \left(t+\frac{v}{n}-x \right)^{N}}{N!}dt\\
\leq& 2 \left \Vert f^{(N)} \right \Vert_{\infty} n \int_0^\frac{1}{n} \frac{ \left( |t|+\left \vert \frac{v}{n}-x \right \vert \right)^{N}}{N!}dt\\
\leq & 2 \frac{\left \Vert f^{(N)} \right \Vert_{\infty}}{N!} \left( \frac{1}{n}+\left \vert \frac{v}{n}-x \right \vert \right)^{N},
\end{align*}
and if  $t+\frac{v}{n} < x$ then $|\gamma(v)|\leq  2 \left \Vert f^{(N)} \right \Vert_{\infty} \frac{\left( \frac{1}{n}+\left \vert \frac{v}{n}-x \right \vert \right)^{N}}{N!}.$
 
By considering the above inequalities we conclude that 
\begin{equation*}
|\gamma(v)|\leq 2 \frac{\left \Vert f^{(N)} \right \Vert_{\infty}}{N!}  \left( \frac{1}{n}+\left \vert \frac{v}{n}-x \right \vert \right)^{N}
\end{equation*}
and by using Theorem \ref{t1} 
\begin{align*}
 \mathtt{\Gamma}&\leq \left( \int_{\left \vert \frac{v}{n}-x \right \vert\geq\frac{1}{n^{\alpha}} }  \left( \frac{1}{n}+\left \vert \frac{v}{n}-x \right \vert \right)^{N} \Psi(nx-v)dv \right)  \frac{ 2\left \Vert f^{(N)} \right \Vert_{\infty}}{N!} \\ 
& \leq \frac{ 2^N \left \Vert f^{(N)} \right \Vert_{\infty}}{N!} \int_{|nx-v|\geq n^{1-\alpha}} \left( \frac{1}{n^N}+ \frac{\left \vert nx-v\right \vert^N}{n^N}\right)\Psi(nx-v)dv\\
&\leq  \frac{2^N \left \Vert f^{(N)} \right \Vert_{\infty}}{n^N N!} \int_{|nx-v|\geq n^{1-\alpha}} \left( 1+ \left \vert nx-v\right \vert^N \right)\Psi(|nx-v|)dv\\
 &\leq \frac{ 2^N\left \Vert f^{(N)} \right \Vert_{\infty}}{n^N N!} \int_F \frac{1}{2}\left( q+\frac{1}{q} \right) \beta (\ln B)B^{-\beta \left(|nx-v|-1\right)}(1+|nx-v|)^N dv, ~F=\{v \in \mathbb{R}: |nx-v|\geq n^{1-\alpha} \}\\
 & \leq \frac{ 2^N\left \Vert f^{(N)} \right \Vert_{\infty}}{ n^N N!} \frac{1}{2}\left( q+\frac{1}{q} \right) \beta  (\ln B)\int_K B^{-\beta  \left(|nx-v|-1\right)}(1+|nx-v|)^N dv \\
 &= \frac{ 2^N\left \Vert f^{(N)} \right \Vert_{\infty}}{ n^N N!} \frac{1}{2}\left( q+\frac{1}{q} \right) \beta  (\ln B) \int_{n^{1-\alpha}}^{\infty} B^{-\beta  (x-1)} \left( 1+x^N \right)dx\\
 &= \frac{ 2^N\left \Vert f^{(N)} \right \Vert_{\infty}}{ n^N N!} \left( q+\frac{1}{q} \right)B^{\beta } \beta  (\ln B) \int_{n^{1-\alpha}}^{\infty} B^{-\beta  x} \left( 1+x^N \right)dx\\
 &= \frac{ 2^N\left \Vert f^{(N)} \right \Vert_{\infty}}{ n^N N!}\left( q+\frac{1}{q} \right)B^{ \beta } \beta  (\ln B) \left[  \int_{n^{1-\alpha}}^{\infty} B^{-\beta  x} dx+  \int_{n^{1-\alpha}}^{\infty} B^{-\beta  x} x^N dx \right]\\
 &= \frac{ 2^N\left \Vert f^{(N)} \right \Vert_{\infty}}{ n^N N!} \left( q+\frac{1}{q} \right)B^{\beta } \beta  (\ln B) \left[ \frac{ B^{-\beta  x}}{ \beta  (\ln B)} \bigg \vert_{n^{1-\alpha}}^{\infty}+  \int_{n^{1-\alpha}}^{\infty} B^{-\beta  x} x^N dx \right]\\
 &=\frac{ 2^N\left \Vert f^{(N)} \right \Vert_{\infty}}{ n^N N!} \left( q+\frac{1}{q} \right)B^{\beta } \beta  (\ln B) \left[ \frac{ B^{-\beta  (n^{1-\alpha})}}{ \beta  (\ln B)}+  \int_{n^{1-\alpha}}^{\infty} B^{-\beta  x} x^N dx \right]=\mathtt{M}\\
\end{align*}
and
\begin{align*}
\int_{n^{1-\alpha}}^{\infty} B^{-\beta  x} x^N dx&=\frac{1}{\beta ^{N+1}} \int_{n^{1-\alpha}}^{\infty}  B^{-\beta  x} (\beta  x)^N (\beta  dx)\\
& =\frac{1}{\beta ^{N+1}} \int_{\beta  n^{1-\alpha}}^{\infty}  B^{-t} t^N dt\\
&  \leq \frac{1}{\beta ^{N+1}} \int_{\beta  n^{1-\alpha}}^{\infty}  B^{-t} 2^N N! B^{\frac{t}{2}} dt\\
&=\frac{ 2^N N!}{\beta ^{N+1}} \int_{\beta  n^{1-\alpha}}^{\infty}  B^{-\frac{t}{2}} dt\\
&= \frac{ 2^{N+1} N!}{\beta ^{N+1}} \frac{- B^{-\frac{t}{2}}}{\ln B} \bigg \vert_{\beta  n^{1-\alpha}}^{\infty}\\
&= \frac{ 2^{N+1} N!}{\beta ^{N+1}} \frac{ B^{-\frac{\beta  n^{1-\alpha}}{2}}}{\ln B}.
\end{align*}
Then we obtain 
\begin{equation*}
\mathtt{M}\leq \frac{ 2^N\left \Vert f^{(N)} \right \Vert_{\infty}}{n^N N!} \left( q+\frac{1}{q} \right)B^{\beta }  \left[ B^{-\beta  (n^{1-\alpha})}+  \frac{ 2^{N+1} N!}{\beta ^{N}}  B^{-\frac{\beta  n^{1-\alpha}}{2}} \right],
\end{equation*}
and since 
\begin{equation*}
\beta  n^{1-\alpha} >\frac{\beta  n^{1-\alpha}}{2}\Rightarrow -\beta  n^{1-\alpha}< \frac{\beta  n^{1-\alpha}}{2}\Rightarrow  B^{-\beta  (n^{1-\alpha})} < B^{-\frac{\beta a n^{1-\alpha}}{2}}
\end{equation*}
we can easily write that 
\begin{align*}
\mathtt{M}&\leq \frac{ 2^N\left \Vert f^{(N)} \right \Vert_{\infty}}{n^N N!} \left( q+\frac{1}{q} \right)B^{\beta }  \left[ B^{-\beta  (n^{1-\alpha})}+  \frac{ 2^{N+1} N!}{\beta ^{N}}  B^{-\frac{\beta  n^{1-\alpha}}{2}} \right]\\
& =\frac{ 2^N\left \Vert f^{(N)} \right \Vert_{\infty}}{n^N N!} \left( q+\frac{1}{q} \right)B^{\beta } B^{-\beta  (n^{1-\alpha})} \left[1 +  \frac{ 2^{N+1} N!}{\beta ^{N}}  \right].
\end{align*}
Hence, we find 
\begin{equation*}
\left \vert  \int_{\left \vert \frac{v}{n}-x \right \vert\geq\frac{1}{n^{\alpha}} } n \left( \int_{0}^{\frac{1}{n}} \left(  \int_x^{t+\frac{v}{n}} \left( f^{(N)}(s)-f^{(N)}(x)\right)\frac{ \left(t+\frac{v}{n}-s \right)^{N-1}}{(N-1)!}ds\right)dt\right) \Psi(nx-v)dv \right \vert
\end{equation*}
\begin{equation*}
\leq \frac{ 2^N\left \Vert f^{(N)} \right \Vert_{\infty}}{ n^N N!} \left( q+\frac{1}{q} \right)B^{\beta }  \left[ 1+  \frac{ 2^{N+1} N!}{\beta ^{N}} \right]  B^{-\frac{\beta  n^{1-\alpha}}{2}}\rightarrow 0,~ n\rightarrow \infty.
\end{equation*}
Also
\begin{align*}
\left \vert R_n (x) \right \vert &\leq \omega \left(f^{(N)}, \frac{1}{n}+\frac{1}{n^{\alpha}} \right) \frac{\left( \frac{1}{n}+\frac{1}{n^{\alpha}}\right)^N}{N!}\\
& + \frac{ 2^N\left \Vert f^{(N)} \right \Vert_{\infty}}{ n^N N!} \left( q+\frac{1}{q} \right) B^{\beta }  \left[ 1+  \frac{ 2^{N+1} N!}{\beta ^{N}} \right]  B^{-\frac{\beta  n^{1-\alpha}}{2}}\rightarrow 0,~n\rightarrow \infty.
\end{align*}
So we notice for $k=1, 2, \cdots, N$ that 
\begin{align*}
\left \vert \mathtt{A}^*_n \left((\cdot -x)^k \right)\right \vert &= \left \vert  n \int_{-\infty}^{\infty} \left( \int_{0}^{\frac{1}{n}} \left( t+ \frac{v}{n}-x \right)^k dt \right) \Psi(nx-v)dv\right \vert\\
&\leq   n \int_{-\infty}^{\infty} \left(  \int_{0}^{\frac{1}{n}} \left \vert t+ \frac{v}{n}-x \right \vert^k dt \right) \Psi(nx-v)dv\\
& \leq n \int_{-\infty}^{\infty} \left( \int_{0}^{\frac{1}{n}} \left( | t|+ \left \vert \frac{v}{n}-x \right \vert \right)^k dt \right) \Psi(nx-v)dv\\
&\leq  \int_{-\infty}^{\infty}  \left(  \frac{1}{n}+ \left \vert \frac{v}{n}-x \right \vert \right)^k  \Psi(nx-v)dv\\
& =\frac{1}{n^k} \left[  \int_{-\infty}^{\infty}  \left( 1+|nx-v|\right)^k \Psi(nx-v)dv\right]\\
& \leq \frac{2^{k-1}}{n^k} \left[1+  \int_{-\infty}^{\infty}   |nx-v|^k \Psi(nx-v)dv\right]\\
& \leq \frac{2^{k-1}}{n^k} \left[1+  \int_{-\infty}^{\infty}   |h|^k  \Psi(h)dh\right]\\
& \leq \frac{2^{k-1}}{n^k} \left[1+  \frac{ B^{\beta }-1}{(B^{\beta }+1)(k+1)}  +\frac{ B^{\beta } \left( q+\frac{1}{q} \right) k!}{\beta ^k (\ln B)^k}  \right]\rightarrow 0, ~as ~n\rightarrow \infty
\end{align*}
and we complete the proof.
\end{proof}
Our next result deals with  activated Quadrature operators.
\begin{theorem}
\label{12}
If $0<\alpha<1$, $n \in \mathbb{N}:n^{1-\alpha}>2$, $x \in \mathbb{R}$, $f \in C^{N}(\mathbb{R})$, $N \in \mathbb{N}$ with $f^{(N)} \in C_B(\mathbb{R})$. Then the followings hold: 
\begin{itemize}
\item[(i)]
\begin{align*}
\left|\overline{\mathtt{A}_n} (f)(x)-f(x) - \sum_{k=1}^N \frac{f^{(k)}(x)}{k!} \left(\overline{ \mathtt{A}_n} \left( (\cdot -x)^k \right) \right)(x)\right| 
\end{align*}
\begin{align*}
&\leq  \omega \left(f^{(N)}, \frac{1}{n}+\frac{1}{n^{\alpha}}\right) \frac{\left(  \frac{1}{n}+\frac{1}{n^{\alpha}}  \right)^N}{N!}\\
&+\frac{ 2^N\left \Vert f^{(N)} \right \Vert_{\infty}}{ n^N N!} \left( q+\frac{1}{q} \right) B^{\beta }  \left[ 1+  \frac{ 2^{N+1} N!}{\beta ^{N}} \right]  B^{-\frac{\beta  n^{1-\alpha}}{2}}\rightarrow 0,~as~ n\rightarrow \infty,
\end{align*}
\item[(ii)]
if $ f^{(k)}(x)=0$, $k=1, 2, \cdots, N$ then 
\begin{align*}
\vert \overline{\mathtt{A}_n}f)(x)-f(x)\vert &\leq  \omega \left(f, \frac{1}{n}+\frac{1}{n^{\alpha}}\right) \frac{\left(   \frac{1}{n}+\frac{1}{n^{\alpha}} \right)^N}{N!}\\
&+\frac{ 2^N\left \Vert f^{(N)} \right \Vert_{\infty}}{ n^N N!} \left( q+\frac{1}{q} \right) B^{\beta }  \left[ 1+  \frac{ 2^{N+1} N!}{\beta ^{N}} \right]  B^{-\frac{\beta  n^{1-\alpha}}{2}},
\end{align*}
\item[(iii)]
\begin{align*}
\vert\overline{\mathtt{A}_n}(f)(x)-f(x)\vert& \leq \sum_{k=1}^N \frac{\left \vert f^{(k)}(x)\right \vert}{k!} \frac{2^{k-1}}{n^k} \left[1+  \frac{ B^{\beta }-1}{(B^{\beta }+1)(k+1)}  +\frac{ B^{\beta } \left( q+\frac{1}{q} \right) k!}{\beta ^k (\ln B)^k}  \right]\\
&+  \omega \left(f^{(N)}, \frac{1}{n}+\frac{1}{n^{\alpha}}\right) \frac{\left(   \frac{1}{n}+\frac{1}{n^{\alpha}} \right)^N}{N!}\\
&+ \frac{ 2^N\left \Vert f^{(N)} \right \Vert_{\infty}}{ n^N N!} \left( q+\frac{1}{q} \right) B^{\beta }  \left[ 1+  \frac{ 2^{N+1} N!}{\beta ^{N}} \right]  B^{-\frac{\beta  n^{1-\alpha}}{2}},
\end{align*}
\item[(iv)]
\begin{align*}
\Vert\overline{\mathtt{A}_n}(f)-f \Vert_{\infty}& \leq \sum_{k=1}^N \frac{\left\Vert f^{(k)} \right \Vert_{\infty}}{k!} \frac{2^{k-1}}{n^k} \left[1+  \frac{ B^{\beta }-1}{(B^{\beta }+1)(k+1)}  +\frac{ B^{\beta } \left( q+\frac{1}{q} \right) k!}{\beta ^k (\ln B)^k}  \right]  \\
&+ \omega \left(f^{(N)}, \frac{1}{n}+\frac{1}{n^{\alpha}}\right) \frac{\left(   \frac{1}{n}+\frac{1}{n^{\alpha}} \right)^N}{N!}\\
&+\frac{ 2^N\left \Vert f^{(N)} \right \Vert_{\infty}}{ n^N N!} \left( q+\frac{1}{q} \right) B^{\beta }  \left[ 1+  \frac{ 2^{N+1} N!}{\beta ^{N}} \right]  B^{-\frac{\beta  n^{1-\alpha}}{2}}.
\end{align*}
\end{itemize}
\end{theorem}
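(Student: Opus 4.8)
The plan is to mirror the argument already carried out for the Kantorovich operator in Theorem \ref{11}, exploiting the single structural fact that the quadrature nodes $\frac{v}{n}+\frac{s}{nr}$ satisfy $0<\frac{s}{nr}\leq\frac{1}{n}$ for every $s=1,\dots,r$, so that each node lies within $\frac{1}{n}$ of the base point $\frac{v}{n}$, exactly as the Kantorovich integration variable $t\in(0,\frac{1}{n})$ does. First I would invoke Taylor's formula with integral remainder for $f\in C^{N}(\mathbb{R})$ at each node $\frac{v}{n}+\frac{s}{nr}$, multiply by $w_s$, sum over $s$ using $\sum_{s=1}^{r}w_s=1$, and integrate against $\Psi(nx-v)$. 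This produces the decomposition
\[
\overline{\mathtt{A}_n}(f)(x)-f(x)=\sum_{k=1}^{N}\frac{f^{(k)}(x)}{k!}\,\overline{\mathtt{A}_n}\big((\cdot-x)^k\big)(x)+R_n(x),
\]
with remainder $R_n(x)=\int_{-\infty}^{\infty}\gamma(v)\,\Psi(nx-v)\,dv$, where
\[
\gamma(v):=\sum_{s=1}^{r}w_s\int_x^{\frac{v}{n}+\frac{s}{nr}}\big(f^{(N)}(t)-f^{(N)}(x)\big)\frac{\big(\frac{v}{n}+\frac{s}{nr}-t\big)^{N-1}}{(N-1)!}\,dt.
\]

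Next I would split $\mathbb{R}$ into the regions $|\frac{v}{n}-x|<\frac{1}{n^{\alpha}}$ and $|\frac{v}{n}-x|\geq\frac{1}{n^{\alpha}}$. On the near region, along each remainder integral $|t-x|\leq\frac{1}{n}+\frac{1}{n^{\alpha}}$, so the estimate $|f^{(N)}(t)-f^{(N)}(x)|\leq\omega(f^{(N)},\frac{1}{n}+\frac{1}{n^{\alpha}})$ together with $\sum_s w_s=1$ gives
\[
|\gamma(v)|\leq\omega\Big(f^{(N)},\tfrac{1}{n}+\tfrac{1}{n^{\alpha}}\Big)\frac{\big(\frac{1}{n}+\frac{1}{n^{\alpha}}\big)^{N}}{N!},
\]
exactly as in Case 1 of Theorem \ref{11}. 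On the far region I would use the crude bound $|f^{(N)}(t)-f^{(N)}(x)|\leq 2\|f^{(N)}\|_{\infty}$, obtaining $|\gamma(v)|\leq 2\|f^{(N)}\|_{\infty}\frac{(\frac{1}{n}+|\frac{v}{n}-x|)^{N}}{N!}$; substituting $h=nx-v$ and applying Theorem \ref{t1} reproduces verbatim the tail computation of Theorem \ref{11} and yields the exponentially small bound
\[
\frac{2^{N}\|f^{(N)}\|_{\infty}}{n^{N}N!}\Big(q+\tfrac{1}{q}\Big)B^{\beta}\Big[1+\tfrac{2^{N+1}N!}{\beta^{N}}\Big]B^{-\frac{\beta n^{1-\alpha}}{2}}.
\]
Adding the two regional estimates controls $|R_n(x)|$, giving assertions (i) and (ii).

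Finally, for (iii) and (iv) I would estimate the moment factors $\overline{\mathtt{A}_n}((\cdot-x)^k)(x)$. Bounding $|\frac{v}{n}+\frac{s}{nr}-x|\leq\frac{1}{n}+|\frac{v}{n}-x|=\frac{1}{n}(1+|nx-v|)$, applying $\sum_s w_s=1$, the elementary inequality $(1+|h|)^k\leq 2^{k-1}(1+|h|^k)$, and the moment bound from the Proposition, I obtain
\[
\big|\overline{\mathtt{A}_n}\big((\cdot-x)^k\big)(x)\big|\leq\frac{2^{k-1}}{n^{k}}\Big[1+\frac{B^{\beta}-1}{(B^{\beta}+1)(k+1)}+\frac{B^{\beta}(q+\frac{1}{q})k!}{\beta^{k}(\ln B)^{k}}\Big],
\]
and combining this with the remainder estimate delivers all four parts.

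The proof presents no genuinely new difficulty: the only structural observation is that the quadrature shifts $\frac{s}{nr}$ are uniformly bounded by $\frac{1}{n}$, which lets the weighted sum $\sum_s w_s(\cdots)$ be handled exactly like the Kantorovich integral $n\int_0^{1/n}(\cdots)\,dt$. The mild bookkeeping hurdle is keeping the weights $w_s$ inside every estimate and confirming that the resulting constants coincide identically with those of Theorem \ref{11}; the exponential tail bound via Theorem \ref{t1} is the computational core, but it is already available from the previous proof and requires no modification.
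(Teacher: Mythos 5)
Your proposal is correct and follows essentially the same route as the paper's own proof: the same Taylor expansion at the nodes $\frac{v}{n}+\frac{s}{nr}$ with the weights $w_s$ summed via $\sum_s w_s=1$, the same near/far split at $|\frac{v}{n}-x|=\frac{1}{n^{\alpha}}$ with the identical bounds on $\gamma(v)$, the same reuse of the tail computation from the Kantorovich case via Theorem \ref{t1}, and the same moment estimate $\bigl(\frac{1}{n}+|\frac{v}{n}-x|\bigr)^k$ combined with $(1+|h|)^k\leq 2^{k-1}(1+|h|^k)$ and the Proposition. The paper itself invokes ``similarly, as in the earlier theorem'' for the exponential tail, which is precisely the reduction you identify as the only structural point.
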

\begin{proof}
Since it is already known that 
\begin{align*}
f \left( \frac{v}{n}+\frac{s}{nr} \right)&=\sum_{k=0}^n \frac{f^{(k)}(x)}{k!}\left( \frac{v}{n}+\frac{s}{nr} -x \right)^k\\
&+ \int_{x}^{\frac{v}{n}+\frac{s}{nr}} \left( f^{(N)}(t)- f^{(N)}(x) \right) \frac{\left( \frac{v}{n}+\frac{s}{nr} -t \right)^{N-1}}{(N-1)!}dt
\end{align*}
and 
\begin{align*}
\sum_{s=1}^r w_s f \left( \frac{v}{n}+\frac{s}{nr} \right)&=\sum_{k=0}^n \frac{f^{(k)}(x)}{k!} \sum_{s=1}^r w_s\left( \frac{v}{n}+\frac{s}{nr} -x \right)^k\\
&+ \sum_{s=1}^r w_s \int_{x}^{\frac{v}{n}+\frac{s}{nr}} \left( f^{(N)}(t)- f^{(N)}(x) \right) \frac{\left( \frac{v}{n}+\frac{s}{nr} -t \right)^{N-1}}{(N-1)!}dt,
\end{align*}
we can write that  
\begin{align*}
\overline{\mathtt{A}_n}(f)(x)&= \int_{- \infty}^{\infty} \left( \sum_{s=1}^r w_s f \left( \frac{v}{n}+\frac{s}{nr} \right) \right)\Psi(nx-v)dv\\
&= \sum_{k=0}^n \frac{f^{(k)}(x)}{k!} \left(    \int_{- \infty}^{\infty} \sum_{s=1}^r w_s \left( \frac{v}{n}+\frac{s}{nr} -x \right)^k\           \right)\Psi(nx-v)dv\\
&+ \int_{- \infty}^{\infty} \left( \sum_{s=1}^r w_s \int_{x}^{\frac{v}{n}+\frac{s}{nr}} \left( f^{(N)}(t)- f^{(N)}(x) \right) \frac{\left( \frac{v}{n}+\frac{s}{nr} -t \right)^{N-1}}{(N-1)!} dt\right)\Psi(nx-v)dv,
\end{align*}
and 
\begin{align*}
\overline{\mathtt{A}_n}(f)(x)-f(x)& = \sum_{k=1}^N \frac{\left\Vert f^{(k)} \right \Vert_{\infty}}{k!} \left( \overline{\mathtt{A}_n} \left((\cdot -x)^k \right)(x)\right) +R_n(x), 
\end{align*}
where
\begin{equation*}
R_n(x):=\int_{- \infty}^{\infty} \left( \sum_{s=1}^r w_s \int_{x}^{\frac{v}{n}+\frac{s}{nr}}  \left( f^{(N)}(t)- f^{(N)}(x) \right) \frac{\left( \frac{v}{n}+\frac{s}{nr} -t \right)^{N-1}}{(N-1)!} dt\right)\Psi(nx-v)dv
\end{equation*}
and 
\begin{equation*}
\gamma(v):=\sum_{s=1}^r w_s \int_{x}^{\frac{v}{n}+\frac{s}{nr}} \left( f^{(N)}(t)- f^{(N)}(x) \right) \frac{\left( \frac{v}{n}+\frac{s}{nr} -t \right)^{N-1}}{(N-1)!}dt.
\end{equation*}
Now we have two cases: \begin{itemize}
\item[1.]  $ \left \vert \frac{v}{n}-x\right \vert< \frac{1}{n^{\alpha}},$
\item[2.]  $ \left \vert \frac{v}{n}-x\right \vert \geq \frac{1}{n^{\alpha}}.$
\end{itemize}
Let us start with Case 1. 
\begin{itemize}
\item[Case 1 (i):] When  $\frac{v}{n}+\frac{s}{nr}\geq x$, we have 
\begin{align*}
|\gamma(v)|&\leq\sum_{s=1}^r w_s \omega \left( f^{(N)}, \left \vert \frac{v}{n}+\frac{s}{nr} -x\right \vert \right) \frac{\left( \frac{v}{n}+\frac{s}{nr} -x \right)^{N}}{N!}dt\\
&\leq \omega \left(f^{(N)}, \frac{1}{n}+\frac{1}{n^{\alpha}}\right) \frac{\left(   \frac{1}{n}+\frac{1}{n^{\alpha}} \right)^N}{N!}.
\end{align*}
\item[Case 1 (ii):] When $\frac{v}{n}+\frac{s}{nr}<x$, we have
\begin{align*}
|\gamma(v)|&= \left \vert \sum_{s=1}^r w_s \int^{x}_{\frac{v}{n}+\frac{s}{nr}} \left( f^{(N)}(t)- f^{(N)}(x) \right) \frac{ \left( t-\left( \frac{v}{n}+\frac{s}{nr} \right) \right)^{N-1}}{(N-1)!}dt \right \vert \\
&\leq \sum_{s=1}^r w_s \int^{x}_{\frac{v}{n}+\frac{s}{nr}}  \left \vert f^{(N)}(t)- f^{(N)}(x) \right \vert \frac{ \left( t-\left( \frac{v}{n}+\frac{s}{nr} \right) \right)^{N-1}}{(N-1)!}dt\\
&\leq\sum_{s=1}^r w_s \omega \left( f^{(N)}, \left \vert x-\left( \frac{v}{n}+\frac{s}{nr} \right) \right \vert \right) \frac{\left( x-\left( \frac{v}{n}+\frac{s}{nr} \right)\right)^{N}}{N!}dt\\
&\leq \omega \left(f^{(N)}, \frac{1}{n}+\frac{1}{n^{\alpha}}\right) \frac{\left(   \frac{1}{n}+\frac{1}{n^{\alpha}} \right)^N}{N!}.
\end{align*}
\end{itemize}
By considering the above inequalities,  we find that 
\begin{equation*}
|\gamma(v)|\leq \omega \left(f^{(N)}, \frac{1}{n}+\frac{1}{n^{\alpha}}\right) \frac{\left(   \frac{1}{n}+\frac{1}{n^{\alpha}} \right)^N}{N!}
\end{equation*}
and
\begin{equation*}
\left \vert \int_{\left \vert \frac{v}{n}-x \right \vert< \frac{1}{n^{\alpha}}} \left( \sum_{s=1}^r w_s \int_{x}^{\frac{v}{n}+\frac{s}{nr}} \left( f^{(N)}(t)- f^{(N)}(x) \right) \frac{\left( \frac{v}{n}+\frac{s}{nr} -t \right)^{N-1}}{(N-1)!}dt \right)\Psi(nx-v)dv \right \vert
\end{equation*} 
\begin{equation*}
 \leq \int_{\left \vert \frac{v}{n}-x \right \vert< \frac{1}{n^{\alpha}}} \Psi(nx-v) |\gamma (v)|dv \leq \omega \left(f^{(N)}, \frac{1}{n}+\frac{1}{n^{\alpha}}\right) \frac{\left(   \frac{1}{n}+\frac{1}{n^{\alpha}} \right)^N}{N!}.
\end{equation*}
\begin{itemize}
\item[Case 2:] In this case, we can write that
\end{itemize}
\begin{equation*}
\left \vert \int_{\left \vert \frac{v}{n}-x \right \vert \geq \frac{1}{n^{\alpha}}} \Psi(nx-v)  \left( \sum_{s=1}^r w_s \int_{x}^{\frac{v}{n}+\frac{s}{nr}} \left( f^{(N)}(t)- f^{(N)}(x) \right) \frac{\left( \frac{v}{n}+\frac{s}{nr} -t \right)^{N-1}}{(N-1)!}dt \right)dv \right \vert
\end{equation*}
\begin{equation*}
 \leq \int_{\left \vert \frac{v}{n}-x \right \vert\geq \frac{1}{n^{\alpha}}} \Psi(nx-v) |\gamma (v)|dv =:\xi.
\end{equation*}
First let us consider the Case  $\frac{v}{n}+\frac{s}{nr} \geq x$. Then we have 
\begin{equation*}
 |\gamma(v)|\leq \frac{ 2 \left \Vert f^{(N)} \right \Vert_{\infty}}{N!} \sum_{s=1}^r w_s \left( \frac{v}{n}+\frac{s}{nr}-x \right)^N.
\end{equation*}
Now let us assume that  $\frac{v}{n}+\frac{s}{nr} < x$. Then we have 
\begin{align*}
|\gamma(v)|&= \left \vert \sum_{s=1}^r w_s \int^{x}_{\frac{v}{n}+\frac{s}{nr}} \left( f^{(N)}(t)- f^{(N)}(x) \right) \frac{ \left( t-\left( \frac{v}{n}+\frac{s}{nr} \right) \right)^{N-1}}{(N-1)!}dt \right \vert \\
&\leq \sum_{s=1}^r w_s \int^{x}_{\frac{v}{n}+\frac{s}{nr}}  \left \vert f^{(N)}(t)- f^{(N)}(x) \right \vert \frac{ \left( t-\left( \frac{v}{n}+\frac{s}{nr} \right) \right)^{N-1}}{(N-1)!}dt\\
&\leq  \frac{ 2 \left \Vert f^{(N)} \right \Vert_{\infty}}{N!} \sum_{s=1}^r w_s \left( x-\left(\frac{v}{n}+\frac{s}{nr} \right) \right)^N
\end{align*}
and consequently we can write for all cases 
\begin{equation*}
 |\gamma(v)|\leq \frac{ 2 \left \Vert f^{(N)} \right \Vert_{\infty}}{N!}\left( \left \vert x-\frac{v}{n}\right \vert+\frac{1}{n} \right)^N.
\end{equation*}
Similarly, as in the earlier theorem, we have that 
\begin{equation*}
 \left \vert  \int_{\left \vert \frac{v}{n}-x \right \vert \geq \frac{1}{n^{\alpha}}} \Psi(nx-v) \gamma (v)dv \right \vert\leq \xi
\end{equation*}
and
\begin{equation*}
\xi \leq \frac{ 2^N\left \Vert f^{(N)} \right \Vert_{\infty}}{ n^N N!} \left( q+\frac{1}{q} \right) B^{\beta }  \left[ 1+  \frac{ 2^{N+1} N!}{\beta ^{N}} \right]  B^{-\frac{\beta  n^{1-\alpha}}{2}}\rightarrow 0,~as~ n\rightarrow \infty.
\end{equation*}
We also see for $k=1, 2, \cdots, N$ that
\begin{align*}
\left\vert \overline{\mathtt{A}_n} \left((\cdot -x)^k \right)(x)\right \vert&=\left\vert   \int_{- \infty}^{\infty} \left( \sum_{s=1}^r w_s\left( \frac{v}{n}+\frac{s}{nr} -x \right)^k \right)\Psi(nx-v)dv \right \vert\\
&\leq \int_{- \infty}^{\infty} \left( \sum_{s=1}^r w_s \left\vert \frac{v}{n}+\frac{s}{nr} -x  \right \vert^k \right)\Psi(nx-v)dv\\
& \leq  \int_{- \infty}^{\infty} \left( \sum_{s=1}^r w_s \left(\left\vert  \frac{v}{n} -x  \right \vert +\frac{s}{nr} \right)^k \right)\Psi(nx-v)dv\\
&  \leq  \int_{- \infty}^{\infty} \left( \frac{1}{n}+\left\vert  \frac{v}{n} -x  \right \vert  \right)^k \Psi(nx-v)dv\\
& =\frac{1}{n^k} \int_{- \infty}^{\infty} \left( 1+\left\vert nx -v\right \vert  \right)^k \Psi(nx-v)dv\\
& \leq \frac{2^{k-1}}{n^k} \left[1+  \frac{ B^{\beta }-1}{(B^{\beta }+1)(k+1)}  +\frac{ B^{\beta } \left( q+\frac{1}{q} \right) k!}{\beta ^k (\ln B)^k}  \right]\rightarrow 0, ~as ~n\rightarrow \infty.
\end{align*}
Hence we complete the proof.
\end{proof}
\begin{theorem}
If $0<\alpha<1$, $n \in \mathbb{N}:n^{1-\alpha}>2$, $x \in \mathbb{R}$, $f^{(k)} \in C^{N}(\mathbb{R})$, $N \in \mathbb{N}$, $k=0, 1, \cdots, s \in \mathbb{N}$ with $f^{(N+k)} \in C_B(\mathbb{R})$. Then 
\begin{itemize}
\item[(i)]
\begin{align*}
\left| (\mathtt{A}_n (f))^{(k)} (x)-f^{(k)}(x) - \sum_{m=1}^N \frac{f^{(k+m)}(x)}{m!} \left( \mathtt{A}_n\left( (\cdot -x)^{m} \right) \right)(x) \right|
\end{align*}
\begin{equation*}
\leq \frac{\omega \left( f^{(N+k)}, \frac{1}{n^{\alpha}} \right)}{n^{\alpha N}N!}+\frac{2^{N+2} \Vert f^{(N+k)} \Vert_{\infty}B^{ \beta } \left( q+\frac{1}{q} \right)}{n^N \beta ^N} B^{ \frac{-\beta  n^{1-\alpha}}{2}},
\end{equation*}
\item[(ii)]
\begin{align*}
\left|( \mathtt{A}^*_n (f))^{(k)}(x)-f^{(k)}(x) - \sum_{m=1}^N \frac{f^{(k+m)}(x)}{m!} \left(\mathtt{A}^*_n\left( (\cdot -x)^m\right) \right)(x)\right|
\end{align*}
\begin{align*}
&\leq  \omega \left(f^{(N+k)}, \frac{1}{n}+\frac{1}{n^{\alpha}}\right) \frac{\left(  \frac{1}{n}+\frac{1}{n^{\alpha}}  \right)^N}{N!}\\
&+ \frac{ 2^N\left \Vert f^{(N+k)} \right \Vert_{\infty}}{n^N N!} \left( q+\frac{1}{q} \right) B^{\beta }  \left[ 1+  \frac{ 2^{N+1} N!}{\beta ^{N}} \right]  B^{-\frac{\beta  n^{1-\alpha}}{2}},
\end{align*}
\item[(iii)]
\begin{align*}
\left| (\overline{\mathtt{A}_n} (f))^{(k)}(x)-f^{(k)}(x) - \sum_{m=1}^N \frac{f^{(k+m)}(x)}{m!} \left( \overline{ \mathtt{A}_n} \left( (\cdot -x)^{m} \right) \right)(x)\right| 
\end{align*}
\begin{align*}
&\leq  \omega \left(f^{(N+k)}, \frac{1}{n}+\frac{1}{n^{\alpha}}\right) \frac{\left(  \frac{1}{n}+\frac{1}{n^{\alpha}}  \right)^N}{N!}\\
&+\frac{ 2^N\left \Vert f^{(N+k)} \right \Vert_{\infty}}{ n^N N!} \left( q+\frac{1}{q} \right) B^{\beta }  \left[ 1+  \frac{ 2^{N+1} N!}{\beta ^{N}} \right]  B^{-\frac{\beta  n^{1-\alpha}}{2}}
\end{align*}
\end{itemize}
\end{theorem}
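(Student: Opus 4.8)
The plan is to deduce all three estimates directly from the already-established Theorems \ref{10}, \ref{11}, and \ref{12} by using the derivative-commutation identities for the operators. Recall from the earlier remarks that whenever $f \in C^{(s)}(\mathbb{R})$ with $f^{(k)} \in C_B(\mathbb{R})$ one has
\begin{equation*}
\left(\mathtt{A}_n(f)\right)^{(k)} = \mathtt{A}_n(f^{(k)}),
\end{equation*}
and analogously $\left(\mathtt{A}^*_n(f)\right)^{(k)} = \mathtt{A}^*_n(f^{(k)})$ and $\left(\overline{\mathtt{A}_n}(f)\right)^{(k)} = \overline{\mathtt{A}_n}(f^{(k)})$. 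Hence the $k$-th derivative of each operator applied to $f$ equals the operator applied to $f^{(k)}$, and the left-hand side of each of (i)--(iii) is an instance of the left-hand side of the corresponding earlier theorem, with $f$ replaced by $f^{(k)}$.

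Concretely, I would set $g := f^{(k)}$. The standing hypotheses $f^{(k)} \in C^N(\mathbb{R})$ and $f^{(N+k)} \in C_B(\mathbb{R})$ say precisely that $g \in C^N(\mathbb{R})$ with $g^{(N)} = f^{(N+k)} \in C_B(\mathbb{R})$, so $g$ satisfies exactly the assumptions under which Theorems \ref{10}--\ref{12} were proved; moreover $g^{(m)} = f^{(k+m)}$ for each $m = 1, \ldots, N$. Applying Theorem \ref{10}(i) to $g$ and then rewriting $\mathtt{A}_n(g) = \mathtt{A}_n(f^{(k)}) = (\mathtt{A}_n(f))^{(k)}$, $g^{(m)} = f^{(k+m)}$, and $g^{(N)} = f^{(N+k)}$ yields exactly the estimate claimed in (i). The statements (ii) and (iii) follow in the same manner by applying Theorems \ref{11}(i) and \ref{12}(i) to $g$ and invoking the commutation identities for $\mathtt{A}^*_n$ and $\overline{\mathtt{A}_n}$, respectively.

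Since the whole argument is a substitution of $f^{(k)}$ into results that are already available, there is no genuine analytic difficulty. The only point that must be checked is that $f^{(k)}$ inherits the differentiability and boundedness required to invoke the earlier theorems, and this is immediate from the hypotheses $f^{(k)} \in C^N(\mathbb{R})$ and $f^{(N+k)} \in C_B(\mathbb{R})$. Thus the proof reduces to combining the derivative-commutation remarks with Theorems \ref{10}, \ref{11}, and \ref{12}.
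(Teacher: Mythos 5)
Your proposal is correct and is essentially identical to the paper's proof, which simply states that the result follows immediately from Theorems \ref{10}, \ref{11} and \ref{12} via the derivative-commutation identities $(\mathtt{A}_n(f))^{(k)}=\mathtt{A}_n(f^{(k)})$, $(\mathtt{A}^*_n(f))^{(k)}=\mathtt{A}^*_n(f^{(k)})$, $(\overline{\mathtt{A}_n}(f))^{(k)}=\overline{\mathtt{A}_n}(f^{(k)})$ established in the earlier remarks. Your write-up merely makes explicit the substitution $g:=f^{(k)}$ and the verification that $g$ satisfies the hypotheses of those theorems, which the paper leaves implicit.
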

\begin{proof}
By using Theorems \ref{10}, \ref{11}, \ref{12}, we immediately obtain the proof. 
\end{proof}

\section{Iterated Neural Network Operators of Convolution Type }
In this section, we consider the iterated versions of our operators  activated by symmetrized, deformed and parametrized $B-$ generalized logistic function with the motivation of \cite{G.A.2024}.
\begin{remark}(About Iterated Convolution)\\
Notice that 
\begin{equation*}
\mathtt{A}_n(f)(x)=\int_{-\infty}^{ \infty} f \left(x-\frac{h}{n}\right) \Psi(h)dh, ~for~f \in C_{B}(\mathbb{R})
\end{equation*}
and let $x_{k}\rightarrow x$, as $k\rightarrow \infty$, and 
\begin{equation*}
\mathtt{A}_n(f)(x_{k})-\mathtt{A}_n(f)(x)=\int_{-\infty}^{ \infty} 
\left[f \left(x_k-\frac{h}{n}\right)-f \left(x-\frac{h}{n}\right)\right] \Psi(h)dh.
\end{equation*}
We have that 
\begin{equation*}
f \left(x_k-\frac{h}{n}\right) \Psi(h)\rightarrow f \left(x-\frac{h}{n}\right)\Psi(h), ~for~every~ h \in \mathbb{R}, ~ k \rightarrow \infty.
\end{equation*}
\end{remark}
Furthermore
\begin{equation*}
|\mathtt{A}_n(f)(x_{k})-\mathtt{A}_n(f)(x)|
\leq \int_{-\infty}^{ \infty} 
\left \vert f \left(x_k-\frac{h}{n}\right)-f \left(x-\frac{h}{n}\right) \right \vert \Psi(h) dh\rightarrow 0, ~k \rightarrow \infty,
\end{equation*}
by Dominated Convergence Theorem, since 
\begin{equation*}
\left \vert f \left(x_k-\frac{h}{n}\right) \right \vert \Psi(h)dh \leq \Vert f \Vert_{\infty}  \Psi(h)
\end{equation*}
and $\Vert f \Vert_{\infty}  \Psi(h) $ is integrable over $\left( -\infty, \infty \right)$, for every $  h \in \left( -\infty, \infty \right)$.
Thus $\mathtt{A}_n(f) \in C_{B}(\mathbb{R})$.\\
 Also we have that
\begin{equation*}
|\mathtt{A}_n(f)(x)|\leq \Vert f \Vert _{\infty} \int_{-\infty}^{ \infty} \Psi(h)dh =\Vert f \Vert_{\infty}
\end{equation*}
i.e.,
\begin{equation*}
\Vert \mathtt{A}_n(f) \Vert _{\infty} \leq \Vert f \Vert _{\infty}
\end{equation*}
 which means that $A_n$ is bounded and linear for $n \in \mathbb{N}.$
\begin{remark} 
Let $r \in \mathbb{N}$. Since 
\begin{align*}
\mathtt{A}_n^r f-f&=(\mathtt{A}_n^r f-\mathtt{A}_n^{r-1} f)+(\mathtt{A}_n^{r-1} f+\mathtt{A}_n^{r-2} f)+(\mathtt{A}_n^{r-2} f+\mathtt{A}_n^{r-3} f)\\
&+\ldots + (\mathtt{A}_n^2 f-\mathtt{A}_n f)+(\mathtt{A}_n^r f-f).
\end{align*}
\end{remark} 
We have that  $\Vert \mathtt{A}_n^r f-f \Vert _{\infty} \leq r \Vert \mathtt{A}_n f-f\Vert _{\infty} $ and 
\begin{align*}
\mathtt{A}_{k_r}(\mathtt{A}_{k_{r-1}}(\ldots \mathtt{A}_{k_2}(\mathtt{A}_{k_1} f)))-f&=\ldots=\mathtt{A}_{k_r}(\mathtt{A}_{k_{r-1}}(\ldots \mathtt{A}_{k_2}))(\mathtt{A}_{k_1}-f)\\
&+\mathtt{A}_{k_r}(\mathtt{A}_{k_{r-1}}(\ldots \mathtt{A}_{k_3}))(\mathtt{A}_{k_2}-f)\\
&+\mathtt{A}_{k_r}(\mathtt{A}_{k_{r-1}}(\ldots \mathtt{A}_{k_4}))(\mathtt{A}_{k_3}-f)\mathtt{A}_{k_r}\\
&+\ldots+(\mathtt{A}_{k_{r-1}} f-f)+\mathtt{A}_{k_r}f-f
\end{align*}
where $k_1, k_2, \ldots, k_r\in \mathbb{N}:k_1\leq k_2\leq \ldots \leq k_r $ and
\begin{equation*}
\Vert \mathtt{A}_{k_r}(\mathtt{A}_{k_{r-1}}(\ldots \mathtt{A}_{k_2}(\mathtt{A}_{k_1} f)))-f\Vert_{\infty}\leq \sum_{m=1}^r\Vert \mathtt{A}_{k_m}f-f\Vert_{\infty}
\end{equation*}
as in  \cite{G.A.2024}. The similar results can be obtained for $\mathtt{A}^*_n$ and $ \overline{\mathtt{A}_n}$ using the same technique. 
\begin{remark}
Notice that 
\begin{equation*}
\mathtt{A}^*_n(f)(x)=n \int_{-\infty}^{ \infty} \left( \int^{\frac{1}{n}}_0 f \left( t+\left(x-\frac{h}{n}\right)\right)dt\right) \Psi(h) dh, ~f\in C_{B}(\mathbb{R}),
\end{equation*}
and let $x_{k}\rightarrow x$, as $k\rightarrow \infty$.
\end{remark}
and
\begin{equation*}
\left\vert \mathtt{A}^*_n (f)(x_k)-\mathtt{A}^*_n (f)(x)\right\vert 
\end{equation*}
\begin{align*}
\leq& n \int_{-\infty}^{ \infty} \left( \int^{\frac{1}{n}}_0 \left \vert f \left( t+\left(x_N-\frac{h}{n}\right)\right) - f \left( t+\left(x-\frac{h}{n}\right)\right) \right \vert dt\right) \Psi(h)dh,
\end{align*}
  by  Bounded Convergence Theorem, we get that:
\begin{equation*}
x_k\rightarrow x\rightsquigarrow t+\left(x_k-\frac{h}{n}\right)\rightarrow t+\left(x-\frac{h}{n}\right)
\end{equation*}
and 
\begin{equation*}
f \left( t+\left(x_k-\frac{h}{n}\right)\right)\rightarrow f \left( t+\left(x-\frac{h}{n}\right)\right), 
\end{equation*}
\begin{equation*}
\left \vert f \left( t+\left(x_k-\frac{h}{n}\right)\right)\right \vert \leq \Vert f \Vert_{\infty}
\end{equation*}
and $\left[ 0, \frac{1}{n} \right ] $ is finite. Hence
\begin{equation*}
n \int^{\frac{1}{n}}_0 \left \vert f \left( t+\left(x_k-\frac{h}{n}\right)\right) - f \left( t+\left(x-\frac{h}{n}\right)\right) \right \vert dt \rightarrow0, ~as~ k\rightarrow \infty.
\end{equation*}
Therefore it holds  that
\begin{equation*}
 n\int^{\frac{1}{n}}_0 f \left( t+\left(x_k-\frac{h}{n}\right)\right) dt\rightarrow n \int^{\frac{1}{n}}_0 f \left( t+\left(x-\frac{h}{n}\right)\right) dt \rightarrow0, ~as~ k\rightarrow \infty
\end{equation*}
and
\begin{equation*}
 \left( n\int^{\frac{1}{n}}_0 f \left( t+\left(x_k-\frac{h}{n}\right)\right) dt \right)\Psi(h)\rightarrow \left( n \int^{\frac{1}{n}}_0 f \left( t+\left(x-\frac{h}{n}\right)\right) dt \right)\Psi(h), 
\end{equation*}
as $k\rightarrow \infty, ~for~every~ h \in \left( -\infty, \infty \right)$.
Also, we get 
\begin{equation*}
 \left \vert \left( n\int^{\frac{1}{n}}_0 f \left( t+\left(x_k-\frac{h}{n}\right)\right) dt \right)\Psi(h) \right \vert \leq \Vert f \Vert_{\infty}\Psi(h).
\end{equation*}
Again by Dominated Convergence Theorem,
\begin{equation*}
\mathtt{A}^*_n (f)(x_k)\rightarrow \mathtt{A}^*_n (f)(x),~as~ k\rightarrow \infty.
\end{equation*}
Thus  $ \mathtt{A}^*_n (f)(x)$ is bounded and continuous in $x \in \left( -\infty, \infty \right)$ and the iterated facts hold for $\mathtt{A}^*_n$ as in the $ \mathtt{A}_n (f)$ case, all the same. \\
\begin{remark}
Next we observe that: $f\in C_{B}(\mathbb{R})$, and
\begin{equation}
 \overline{\mathtt{A}_n}(f)(x):=\int_{-\infty}^{\infty} \left( \sum_{s=1}^r w_s f \left( \left(x-\frac{h}{n}\right)+\frac{s}{nr}\right)\right) \Psi(h)dh.
\end{equation}
Let $x_{k}\rightarrow x$, as $k\rightarrow \infty$.
\end{remark}
 Then
\begin{align*}
\left \vert \overline{\mathtt{A}_n}(f)(x_k)-\overline{\mathtt{A}_n}(f)(x)\right \vert &=\left\vert \int_{-\infty}^{ \infty} \left(  \sum_{s=1}^r w_s \left( f \left( \left(x_k-\frac{h}{n}\right)+\frac{s}{nr}\right)- f \left( \left(x-\frac{h}{n}\right)+\frac{s}{nr}\right)\right) \right) \Psi(h)dh \right\vert\\
&\leq  \int_{-\infty}^{ \infty}  \left\vert \sum_{s=1}^r w_s \left( f \left( \left(x_k-\frac{h}{n}\right)+\frac{s}{nr}\right)- f \left( \left(x-\frac{h}{n}\right)+\frac{s}{nr} \right) \right)\right\vert \Psi(h)dh \rightarrow 0, ~as~ k\rightarrow \infty.
\end{align*}
The last is from Dominated Convergence Theorem:
\begin{equation*}
\left(x_k-\frac{h}{n}\right)+\frac{s}{nr}\rightarrow \left(x-\frac{h}{n}\right)+\frac{s}{nr}
\end{equation*}
and
\begin{equation*}
\sum_{s=1}^r w_s f \left( \left(x_k-\frac{h}{n}\right)+\frac{s}{nr}\right)\rightarrow \sum_{s=1}^r w_s f \left( \left(x-\frac{h}{n}\right)+\frac{s}{nr} \right)
\end{equation*}
and 
\begin{equation*}
\left(\sum_{s=1}^r w_s f \left( \left(x_k-\frac{h}{n}\right)+\frac{s}{nr}\right)\right)\Psi(h)\rightarrow \left(\sum_{s=1}^r w_s \left( \left(x-\frac{h}{n}\right)+\frac{s}{nr} \right)\right)\Psi(h),
\end{equation*}
as $k\rightarrow \infty, \forall h \in  \left( -\infty, \infty \right)$.
Furthermore, we have 
\begin{equation*}
 \left \vert \sum_{s=1}^r w_s f \left( \left(x_k-\frac{h}{n}\right)+\frac{s}{nr}\right) \right \vert \Psi(h)  \leq \Vert f \Vert_{\infty}\Psi(h)
\end{equation*}
which the last one function is integrable over $ \left( -\infty, \infty \right)$.\\
Therefore 
\begin{equation*}
\overline{\mathtt{A}_n}(f)(x_k)\rightarrow \overline{\mathtt{A}_n}(f)(x),~as~ k\rightarrow \infty.
\end{equation*}
Thus $ \overline{\mathtt{A}_n} (f)(x)$ is  bounded and continuous in $x \in \left( -\infty, \infty \right)$ and the iterated facts holds all the same.
\begin{theorem}
If  $0<\alpha<1$, $n \in \mathbb{N}:n^{1-\alpha}>2$, $r \in \mathbb{N}$, $f \in C_B(\mathbb{R})$, then we have the followings:
\begin{itemize}
\item[(i)] \begin{equation*}
\left \Vert \mathtt{A}_n^r (f) -f \right \Vert_{\infty} \leq r \left \Vert \mathtt{A}_n f -f \right \Vert_{\infty} \leq r \left[ \omega\left(f,\frac{1}{n^{\alpha}}\right)+ \frac{2\left( q+\frac{1}{q} \right)  \Vert f \Vert_{\infty}}{B^{\beta (n^{1-\alpha}-1)}} \right],
\end{equation*}
\item[(ii)] \begin{equation*}
\left \Vert \mathtt{A}_n^{*r} (f) -f \right \Vert_{\infty} \leq r \left \Vert \mathtt{A}^*_n f -f \right \Vert_{\infty} \leq r \left[ \omega\left(f,\frac{1}{n}+\frac{1}{n^{\alpha}}\right)+ \frac{2\left( q+\frac{1}{q} \right)  \Vert f \Vert_{\infty}}{B^{\beta (n^{1-\alpha}-1)}} \right],
\end{equation*}
\item[(iii)] \begin{equation*}
\left \Vert\overline{\mathtt{A}_n}^{r} (f) -f \right \Vert_{\infty} \leq r \left \Vert\overline{\mathtt{A}_n} f -f \right \Vert_{\infty} \leq r \left[ \omega\left(f,\frac{1}{n}+\frac{1}{n^{\alpha}}\right)+ \frac{2\left( q+\frac{1}{q} \right)  \Vert f \Vert_{\infty}}{B^{\beta (n^{1-\alpha}-1)}} \right].
\end{equation*}
\end{itemize}
The rate of convergence of $\mathtt{A}_n^r $, $\mathtt{A}_n^{*r} $, $\overline{\mathtt{A}_n}^{r}$ to identity  is not worse than the rate of convergence of  $\mathtt{A}_n $, $\mathtt{A}_n^{*} $, $\overline{\mathtt{A}_n}$.
\end{theorem}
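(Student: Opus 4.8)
The plan is to reduce each iterated bound to the corresponding single-step estimate (Theorems \ref{theorem3}, \ref{theorem4}, \ref{theorem5}) by exploiting that each of $\mathtt{A}_n$, $\mathtt{A}_n^*$, $\overline{\mathtt{A}_n}$ is a linear contraction on $C_B(\mathbb{R})$. First I would record the two structural facts already established in the preceding remarks: each operator maps $C_B(\mathbb{R})$ into itself and satisfies $\Vert \mathtt{A}_n f\Vert_\infty \leq \Vert f\Vert_\infty$ (and likewise for the starred and barred versions), because $\Psi \geq 0$ integrates to $1$ and the weights $w_s$ form a convex combination. This guarantees the iterates $\mathtt{A}_n^r f$ are well defined in $C_B(\mathbb{R})$ and that each operator norm is at most $1$.

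Next I would set up the telescoping identity sketched in the earlier remark, writing (for $\mathtt{A}_n$, with $\mathtt{A}_n^0 := \mathrm{id}$)
\begin{equation*}
\mathtt{A}_n^r f - f = \sum_{m=0}^{r-1}\left(\mathtt{A}_n^{m+1} f - \mathtt{A}_n^m f\right) = \sum_{m=0}^{r-1} \mathtt{A}_n^m\left(\mathtt{A}_n f - f\right),
\end{equation*}
where the second equality uses linearity of $\mathtt{A}_n$ to factor $\mathtt{A}_n f - f$ out of each difference. The key step is then the contraction estimate: since $\Vert \mathtt{A}_n g\Vert_\infty \leq \Vert g\Vert_\infty$ for every $g \in C_B(\mathbb{R})$, iterating gives $\Vert \mathtt{A}_n^m(\mathtt{A}_n f - f)\Vert_\infty \leq \Vert \mathtt{A}_n f - f\Vert_\infty$ for each $m$. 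Applying the triangle inequality to the telescoping sum yields
\begin{equation*}
\Vert \mathtt{A}_n^r f - f\Vert_\infty \leq \sum_{m=0}^{r-1}\Vert \mathtt{A}_n^m(\mathtt{A}_n f - f)\Vert_\infty \leq r\,\Vert \mathtt{A}_n f - f\Vert_\infty,
\end{equation*}
which is the first inequality of (i). The second inequality of (i) follows immediately by substituting the sup-norm bound of Theorem \ref{theorem3}. Parts (ii) and (iii) are obtained verbatim, replacing $\mathtt{A}_n$ by $\mathtt{A}_n^*$ and $\overline{\mathtt{A}_n}$ and invoking Theorems \ref{theorem4} and \ref{theorem5} respectively, since the same self-mapping and contraction properties hold for these operators for the reasons noted above.

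I do not expect a genuine obstacle here, as all the analytic content — boundedness, the self-mapping of $C_B(\mathbb{R})$, and the single-step rates — is already in place. The only point requiring care is the factoring $\mathtt{A}_n^{m+1} f - \mathtt{A}_n^m f = \mathtt{A}_n^m(\mathtt{A}_n f - f)$, which is precisely where linearity is essential; once that identity is clean, the contraction property collapses the telescoping sum into the factor $r$, and the closing assertion of the theorem — that the rate of convergence of the iterates to the identity is no worse than that of the single operators — is then an immediate reading of these bounds.
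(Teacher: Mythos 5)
Your proof is correct and follows essentially the same route as the paper: the paper's preceding remarks establish exactly the ingredients you use --- that each operator is a linear contraction on $C_B(\mathbb{R})$ (so iterates are well defined with operator norm at most $1$) and the telescoping identity giving $\Vert \mathtt{A}_n^r f - f\Vert_\infty \leq r\,\Vert \mathtt{A}_n f - f\Vert_\infty$ --- after which the paper, like you, simply invokes the single-step bounds of Theorems \ref{theorem3}, \ref{theorem4}, \ref{theorem5}. Your write-up is in fact slightly more explicit than the paper's, since you spell out the factoring $\mathtt{A}_n^{m+1}f - \mathtt{A}_n^m f = \mathtt{A}_n^m(\mathtt{A}_n f - f)$ that the paper's remark leaves implicit.
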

\begin{proof}
The proof follows from Theorems  \ref{theorem3}, \ref{theorem4}, \ref{theorem5}.
\end{proof}

\begin{theorem}
If $0<\alpha<1$, $n \in \mathbb{N}; k_1, k_2, \cdots, k_r \in \mathbb{N}:k_1\leq k_2 \leq  \cdots \leq k_r $, $ k_p^{1-\alpha}>2$, $p=1, 2, \cdots, r;$  $f \in C_B(\mathbb{R})$,  then we have the followings:
\begin{itemize}
\item[(i)] \begin{align*}
\Vert \mathtt{A}_{k_r}(\mathtt{A}_{k_{r-1}}(\ldots \mathtt{A}_{k_2}(\mathtt{A}_{k_1} f)))-f\Vert_{\infty}&\leq \sum_{p=1}^r\Vert \mathtt{A}_{k_p}f-f\Vert_{\infty}\\
& \leq \sum_{p=1}^r  \left[ \omega\left(f,\frac{1}{k^{\alpha}_p}\right)+ \frac{2\left( q+\frac{1}{q} \right)  \Vert f \Vert_{\infty}}{B^{\beta (k_p^{1-\alpha}-1)}} \right] \\
&\leq r \left[ \omega\left(f,\frac{1}{k^{\alpha}_1}\right)+ \frac{2\left( q+\frac{1}{q} \right)  \Vert f \Vert_{\infty}}{B^{\beta (k_1^{1-\alpha}-1)}} \right],
\end{align*}
\item[(ii)] \begin{align*}
\Vert \mathtt{A}^*_{k_r}(\mathtt{A}^*_{k_{r-1}}(\ldots \mathtt{A}^*_{k_2}(\mathtt{A}^*_{k_1} f)))-f\Vert_{\infty}&\leq \sum_{p=1}^r\Vert \mathtt{A}^*_{k_p}f-f\Vert_{\infty}\\
& \leq \sum_{p=1}^r \left[ \omega\left(f,\frac{1}{k_p}+\frac{1}{k_p^{\alpha}}\right)+ \frac{2\left( q+\frac{1}{q} \right)  \Vert f \Vert_{\infty}}{B^{\beta (k_p^{1-\alpha}-1)}} \right] \\
&\leq r \left[ \omega\left(f,\frac{1}{k_1}+\frac{1}{k_1^{\alpha}}\right)+ \frac{2\left( q+\frac{1}{q} \right)  \Vert f \Vert_{\infty}}{B^{\beta (k_1^{1-\alpha}-1)}} \right],
\end{align*}
\item[(iii)] \begin{align*}
\Vert\overline{\mathtt{A}}_{k_r}(\mathtt{A}^*_{k_{r-1}}(\ldots \overline{\mathtt{A}}_{k_2}(\mathtt{A}^*_{k_1} f)))-f\Vert_{\infty}&\leq \sum_{p=1}^r\Vert \overline{\mathtt{A}}_{k_p}f-f\Vert_{\infty}\\
& \leq \sum_{p=1}^r \left[ \omega\left(f,\frac{1}{k_p}+\frac{1}{k_p^{\alpha}}\right)+ \frac{2\left( q+\frac{1}{q} \right)  \Vert f \Vert_{\infty}}{B^{\beta (k_p^{1-\alpha}-1)}} \right] \\
&\leq r \left[ \omega\left(f,\frac{1}{k_1}+\frac{1}{k_1^{\alpha}}\right)+ \frac{2\left( q+\frac{1}{q} \right)  \Vert f \Vert_{\infty}}{B^{ \beta (k_1^{1-\alpha}-1)}} \right],
\end{align*}
\end{itemize}
\end{theorem}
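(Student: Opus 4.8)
The plan is to reduce the composite estimate to the single-operator bounds of Theorems~\ref{theorem3}, \ref{theorem4}, \ref{theorem5} by combining a purely algebraic telescoping identity with the sup-norm contractivity of each operator.

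First I would record the telescoping identity, valid for any bounded linear operators $T_1,\dots,T_r$ on $C_B(\mathbb{R})$:
\[
T_r T_{r-1}\cdots T_1 f - f = \sum_{p=1}^{r} \bigl(T_r T_{r-1}\cdots T_{p+1}\bigr)\bigl(T_p f - f\bigr),
\]
where for $p=r$ the leading factor is understood to be the identity. This is verified at once by expanding the right-hand side and cancelling consecutive terms, and it is precisely the decomposition sketched in the Remark preceding the statement. Specializing $T_p=\mathtt{A}_{k_p}$ and taking sup-norms, the triangle inequality yields
\[
\bigl\|\mathtt{A}_{k_r}\cdots\mathtt{A}_{k_1}f-f\bigr\|_{\infty}\leq \sum_{p=1}^{r}\bigl\|\bigl(\mathtt{A}_{k_r}\cdots\mathtt{A}_{k_{p+1}}\bigr)\bigl(\mathtt{A}_{k_p}f-f\bigr)\bigr\|_{\infty}.
\]

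The decisive ingredient is the contraction property $\|\mathtt{A}_n(g)\|_{\infty}\leq\|g\|_{\infty}$, which follows from $\Psi\geq 0$ and $\int_{-\infty}^{\infty}\Psi=1$ and was already noted for $\mathtt{A}_n$ in the iterated-operator remarks; the same computation, using in addition $\sum_{s}w_s=1$, delivers it for $\mathtt{A}_n^*$ and $\overline{\mathtt{A}_n}$. Applying this repeatedly to strip off $\mathtt{A}_{k_r},\dots,\mathtt{A}_{k_{p+1}}$ collapses each summand to $\|\mathtt{A}_{k_p}f-f\|_{\infty}$, giving the first inequality in (i), namely $\|\mathtt{A}_{k_r}\cdots\mathtt{A}_{k_1}f-f\|_{\infty}\leq\sum_{p=1}^{r}\|\mathtt{A}_{k_p}f-f\|_{\infty}$. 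The second inequality is then obtained by substituting the single-step bound of Theorem~\ref{theorem3} into each summand.

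For the final inequality I would invoke $k_1\leq k_p$ through two monotonicities: since $1/k_p^{\alpha}\leq 1/k_1^{\alpha}$, monotonicity of the modulus of continuity gives $\omega(f,1/k_p^{\alpha})\leq\omega(f,1/k_1^{\alpha})$; and since $1-\alpha>0$ forces $k_p^{1-\alpha}\geq k_1^{1-\alpha}$ while $B>1$, the tail term satisfies $B^{-\beta(k_p^{1-\alpha}-1)}\leq B^{-\beta(k_1^{1-\alpha}-1)}$. Thus each summand is dominated by its $k_1$-counterpart, and the sum of $r$ such bounds produces the claimed $r$-fold estimate. Parts (ii) and (iii) are structurally identical, invoking Theorems~\ref{theorem4} and \ref{theorem5} in place of Theorem~\ref{theorem3}. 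I do not anticipate a genuine obstacle here: the only step demanding care is the contractivity of $\mathtt{A}_n^*$ and $\overline{\mathtt{A}_n}$, after which the result is a mechanical assembly of the telescoping identity and the two monotonicities.
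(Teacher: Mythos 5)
Your proposal is correct and follows essentially the same route as the paper: the telescoping decomposition and the contractivity bound $\Vert \mathtt{A}_n(g)\Vert_{\infty}\leq\Vert g\Vert_{\infty}$ are exactly what the paper's preceding remarks establish, and the paper's proof likewise just inserts the single-step estimates of Theorems \ref{theorem3}, \ref{theorem4}, \ref{theorem5}. You merely make explicit two details the paper leaves implicit, namely the contractivity of $\mathtt{A}^*_n$ and $\overline{\mathtt{A}_n}$ and the monotonicity argument ($\omega(f,\cdot)$ nondecreasing, $B>1$, $1-\alpha>0$) giving the final $r$-fold bound in terms of $k_1$.
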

\begin{proof}
The proof follows from Theorems  \ref{theorem3}, \ref{theorem4}, \ref{theorem5}.
\end{proof}
\newpage

\end{document}